\definecolor{hotpink}{rgb}{255, 0, 136} 
\newtheorem{theorem}{Theorem}[section]
\newtheorem{proposition}[theorem]{Proposition}
\newtheorem{lemma}[theorem]{Lemma}
\newtheorem{remark}[theorem]{Remark}
\newtheorem{corollary}[theorem]{Corollary}
\newtheorem*{proposition*}{Proposition}
\newtheorem*{notation*}{Notation}
\newtheorem*{lemma*}{Lemma}
\newtheorem*{fact*}{Fact}
\newtheorem*{remark*}{Remark}
\newtheorem*{bigassumption*}{Big Assumption}
\newtheorem*{smallassumption*}{Small Assumption}
\newtheorem*{question*}{Question}
\newtheorem*{theorem*}{Theorem}
\newtheorem*{propprobtrue*}{A proposition that may or may not be true}
\newtheorem*{conjecture*}{Conjecture}
\newtheorem*{claim*}{Claim}
\newtheorem*{case1*}{Case 1}
\newtheorem*{case1a*}{Case 1a}
\newtheorem*{case1b*}{Case 1b}
\newtheorem*{case2*}{Case 2}
\newtheorem*{case3*}{Case 3}
\newtheorem*{claim1*}{Claim 1}
\newtheorem*{claim2*}{Claim 2}
\newtheorem*{claim3*}{Claim 3}
\newtheorem*{claim4*}{Claim 4}
\newtheorem*{claim5*}{Claim 5}
\newtheorem*{claim6*}{Claim 6}
\newtheorem*{claim7*}{Claim 7}
\newtheorem*{claim8*}{Claim 8}
\newtheorem*{corollary*}{Corollary}
\newtheorem*{armkon*}{A remark on the proofs of the two propositions above}
\theoremstyle{definition}
\newtheorem*{example*}{Example}
\newtheorem*{definition*}{Definition}
\newtheorem{definition}[theorem]{Definition}
\tikzstyle{black dot}=[fill=black, draw=black, shape=circle, scale=0.3]
\tikzstyle{dotted?}=[-, dotted, thick]
\tikzstyle{arrow}=[draw={rgb,255: red,255; green,0; blue,136}, {|->}, fill=none]
\tikzstyle{dashed?}=[-, dashed]
\tikzstyle{blueline}=[-, draw={rgb,255: red,28; green,116; blue,222}]
\tikzstyle{pinkline}=[-, draw={rgb,255: red,255; green,0; blue,136}]
\tikzstyle{blackarrow}=[->]
\tikzstyle{greenline}=[-, draw={rgb,255: red,55; green,255; blue,0}]
\def\T{{\mathcal T}}
\def\Y{{\mathbb Y}}
\def\T{{\mathcal T}}
\def\C{{\mathcal{C}}}
\def\pc{\mathcal{P}_K(\mathbb Y)}
\def\qtoms{\mathcal{C}_K(\mathbb Y)}
\def\toms{\mathcal{C}_K^\T(\mathbb Y)}
\def\diam{\textrm{diam}}
\newcommand{\myitem}[1]{%
\item[#1]\protected@edef\@currentlabel{#1}%
}
\DeclarePairedDelimiter\absval{\lvert}{\rvert}
\title{Two new constructions in the theory of projection complexes}
\date{}
\author{Patrick S. Nairne}
\begin{document}

\emergencystretch 3em

\setcounter{tocdepth}{1}

\raggedbottom

\begin{abstract}
In this paper we provide two new constructions that are useful for the theory of projection complexes developed by Bestvina, Bromberg, Fujiwara and Sisto. We prove that there exists a subtree of the projection complex which is quasiisometric to the projection complex. We use this subtree to form a tree of metric spaces, which is a subgraph of the quasi-tree of metrics spaces and quasiisometric to it. These constructions simplify the metric structure (up to quasiisometry) of the projection complex and the quasi-tree of metric spaces. As an application, we use these constructions to provide a shorter proof of Hume's theorem that the mapping class group admits a quasiisometric embedding into a finite product of trees.
\end{abstract}

\maketitle

\section{Introduction}

\subsection{Results}

Various metric spaces can be understood by considering projections of subspaces onto other subspaces. For example, due to work of Sisto, the metric of a relatively hyperbolic group can be understood by considering projections of peripheral cosets onto other peripheral cosets \cite{SISTO}. Similarly, the metric on the mapping class group $MCG(S)$ can be understood by considering the projection of curves in $S$ onto curve complexes of subsurfaces of $S$ \cite{MM} \cite{MM2}. Bestvina, Bromberg and Fujiwara \cite{BBF1} innovated two structures that you can associate to such metric spaces: the \textit{projection complex} $\pc$ and the \textit{quasi-tree of metric spaces} $\qtoms$. Bestvina, Bromberg and Fujiwara also found a general set of conditions that a set of projections must satisfy such that $\pc$ and $\qtoms$ exist.

A key result in the theory of projection complexes states that $\pc$ is a quasi-tree \cite[Theorem 3.16]{BBF1}. However, in the work of Bestvina, Bromberg and Fujiwara, this result is proved abstractly via Manning's bottleneck condition \cite{MAN}. In this paper, I am able to prove this result more concretely. 

\begin{theorem} \label{intro.thm.subtree}
There exists subtree $\T \subseteq \pc$ such that the inclusion map $\T \hookrightarrow \pc$ is a quasiisometry.
\end{theorem}

\begin{proof}
See \Cref{thm.subtree1} and \Cref{thm.subtree2}.
\end{proof}

Furthermore this subtree $\T$ has a simple definition: having arbitrarily chosen a basepoint $B \in \Y$, the subgraph $\T$ is the union of all standard paths $\Y_K[B,X]$ in the projection complex. 

Analogously, in \cite{HUME}, Hume proves that the quasi-tree of metric spaces $\qtoms$ satisfies a \textit{relative bottleneck property}. Hume then shows that this implies that $\qtoms$ is quasiisometric to a tree-graded space, whose pieces are exactly the metric spaces $\C(Y)$ which form $\qtoms$. Again, this result can be proved more concretely.

\begin{theorem} \label{intro.thm.toms}
There exists a tree of metric spaces $\toms \subseteq \qtoms$ such that the inclusion map $\toms \hookrightarrow \qtoms$ is a quasiisometry.
\end{theorem}

\begin{proof}
See \Cref{thm.toms}.
\end{proof}

And again $\toms$ has a simple definition: $\toms$ is formed by only keeping the \textit{transverse} edges of $\qtoms$ which correspond to edges of $\T$.

As an application of \Cref{intro.thm.toms}, we reprove Hume's theorem \cite[Corollary 2]{HUME} that the mapping class group admits a quasiisometric embedding into a finite product of trees. 

\begin{theorem} \label{intro.thm.mcg}
Let $S_{g,n}$ be a compact surface. Then $MCG(S_{g,n})$ admits a quasiisometric embedding into a finite product of trees. 
\end{theorem}

\subsection{Application to relatively hyperbolic groups}

The constructions in this paper can also be used to quasiisometrically embed certain relatively hyperbolic groups into finite products of binary trees, as described in an upcoming preprint (or as found in previous versions of this preprint).

\subsection{Structure of this paper}

In \Cref{sec.pc}, the relevant background on projection complexes and the quasi-tree of metric spaces is provided. In \Cref{sec.subtree}, the subtree $\T$ is defined and \Cref{intro.thm.subtree} is proved. In \Cref{sec.toms}, the tree of metric spaces $\toms$ is defined and \Cref{intro.thm.toms} is proved. In \Cref{sec.app}, the new proof of Hume's theorem \Cref{intro.thm.mcg} is provided. 

\subsection{Acknowledgements}

Thank you to Alessandro Sisto for useful discussions on how to present this work. 

\tableofcontents

\section{Background} \label{sec.pc}

\subsection{Projection complexes}

The theory of projection complexes and quasi-trees of metric spaces is outlined thoroughly and properly in \cite{BBFS}, nonetheless in this section I briefly provide the necessary background. 

\begin{definition} \label{def.projections}
Let $\Y$ be an indexing set such that for each $Y \in \Y$ we have an associated geodesic metric space $\C(Y)$. Suppose also that for each $Y \in \Y$ we have a function
\[\pi_Y: \Y \setminus \{Y\} \rightarrow \textrm{non-empty subsets of } \C(Y)\]
We call these functions \textit{projections}. We then define $d_Y^\pi: \Y \setminus \{Y\} \times \Y \setminus \{Y\} \rightarrow [0,\infty]$ by
\[d_Y^\pi(X,Z) = \diam(\pi_Y(X) \cup \pi_Y(Z))\]
We call these functions the \textit{projection distances}. The collection of maps $\{\pi_Y\}$ satisfy the \textit{projection axioms} if there exists a constant $\xi < \infty$, known as the \textit{projection constant}, such that
\begin{enumerate}
    \myitem{(P3)}\label{condition.p3} $\diam(\pi_Y(X)) \leq \xi$ for all distinct $X,Y \in \Y$;
    \myitem{(P4)}\label{condition.p4} if $X,Y,Z \in \Y$ are distinct and such that $d_Y^\pi(X,Z) > \xi$ then $d_X^\pi(Y,Z) \leq \xi$;
    \myitem{(P5)}\label{condition.p5} $\{Y : d_Y^\pi(X,Z) > \xi\}$ is finite for each pair $X,Z \in \Y \setminus \{Y\}$.
\end{enumerate}

The axioms \ref{condition.p3} - \ref{condition.p5} are those that appear in "real-life". However we need them to satisfy an extra axiom in order to conduct the theory of projection complexes. The collection of maps $\{\pi_Y\}$ satisfy the \textit{strong projection axioms} if, in addition to \ref{condition.p3} - \ref{condition.p5}, the following condition also holds
\begin{enumerate}       
    \myitem{(P4')}\label{condition.p4'} if $X,Y,Z \in \Y$ are distinct and such that $d_Y^\pi(X,Z) > \xi$ then $\pi_X(Y) = \pi_X(Z)$. 
\end{enumerate}
\end{definition}

Bestvina, Bromberg and Fujiwara prove that, given projections $\{\pi_Y\}$ satisfying the projection axioms, we can adjust the projections $\pi_Y(X)$ by a uniformly bounded amount so that the projections $\{\pi_Y\}$ now satisfy the strong projection axioms with respect to some possibly larger projection constant $\theta$. This is the content of the following theorem (see \cite[Theorem 4.1]{BBFS}). 

\begin{theorem}[Bestvina--Bromberg--Fujiwara] \label{thm.modification}
Suppose we have a collection of maps $\{\pi_Y\}$ satisfying the projection axioms with projection constant $\xi$. Then there exists a new collection of projections 
\[ \{\pi_Y': \Y \setminus \{Y\} \rightarrow \textrm{non-empty subsets of } \C(Y) \}\]
such that if $d_Y(X,Z) := \diam(\pi_Y'(X) \cup \pi_Y'(Z))$ then
\begin{itemize}
    \item $\pi_Y'(X) \subseteq N_\xi(\pi_Y(X))$;
    \item $\{\pi_Y'\}$ now satisfy the strong projection axioms \ref{condition.p3} - \ref{condition.p5} and \ref{condition.p4'} with respect to the projection constant $\theta = 11\xi$;
    \item $d_Y^\pi - 2\xi \leq d_Y \leq d_Y^\pi + 2\xi$.
\end{itemize}
Further, if all the metric spaces $\C(Y)$ are graphs, and all the projections $\pi_Y(X) \subset \C(Y)$ are collections of vertices, then $\pi_Y'(X) \subset \C(Y)$ is also a collection of vertices. 
\end{theorem}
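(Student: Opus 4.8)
The plan is to reproduce the Bestvina--Bromberg--Fujiwara modification, whose entire purpose is to enlarge each projection by a controlled amount so that the \emph{approximate} equality forced by axiom \ref{condition.p4} is upgraded to the \emph{exact} equality demanded by \ref{condition.p4'}. First I would extract the two elementary consequences of the axioms that do all the work: the Behrstock-type dichotomy coming from \ref{condition.p4} (for distinct $X,Y,Z$ one cannot have both $d_Y^\pi(X,Z) > \xi$ and $d_X^\pi(Y,Z) > \xi$), together with the finiteness coming from \ref{condition.p5} and the uniform diameter bound \ref{condition.p3}. These let me speak of $Y$ and $Z$ being \emph{on the same side of $X$}: whenever $d_Y^\pi(X,Z) > \xi$, axiom \ref{condition.p4} gives $d_X^\pi(Y,Z) = \diam(\pi_X(Y) \cup \pi_X(Z)) \le \xi$, so $\pi_X(Z) \subseteq N_\xi(\pi_X(Y))$; thus indices that are ``between'' each other relative to $X$ have projections to $X$ that are pinned close together.

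The construction is then to collapse each same-side class to a single common set. For $Y \ne X$ I would set
\[
\pi_X'(Y) \;=\; \bigcup \bigl\{\, \pi_X(Z) \;:\; Z \text{ lies in the same-side class of } Y \,\bigr\},
\]
which is by design constant on classes, so two indices on the same side of $X$ receive \emph{identical} enlarged projections — exactly what \ref{condition.p4'} requires, since $d_Y^\pi(X,Z) > \xi$ places $Y$ and $Z$ in a common class. Here lies the genuine tension, and the main obstacle. A naive one-step absorption (adjoining only those $\pi_X(Z)$ with $d_Y^\pi(X,Z) > \xi$) trivially stays inside $N_\xi(\pi_X(Y))$ by the remark above, but it need not be constant on classes and so fails to give exact equality; conversely, taking the union over an entire class gives equality but risks the projections \emph{drifting} along long chains out of a single $\xi$-neighbourhood. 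The technical heart is therefore a no-drift lemma asserting that, despite the chaining, \emph{every} member $Z$ of the class has $\pi_X(Z)$ inside one $\xi$-neighbourhood of $\pi_X(Y)$; this is proved by feeding the dichotomy of \ref{condition.p4} into the finiteness of \ref{condition.p5} to control how the projections can be arranged, and it is precisely where the error terms accumulate and force the final projection constant up to $\theta = 11\xi$. Granting it, the containment $\pi_X'(Y) \subseteq N_\xi(\pi_X(Y))$ is the first bullet, and since $\pi_X(Y) \subseteq \pi_X'(Y) \subseteq N_\xi(\pi_X(Y))$ the diameter comparison $d_Y^\pi - 2\xi \le d_Y \le d_Y^\pi + 2\xi$ of the third bullet follows at once.

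With the modified projections in hand I would finish by re-verifying the axioms for $\{\pi_X'\}$ with constant $\theta = 11\xi$. Axiom \ref{condition.p5} transfers immediately: if $d_Y(X,Z) > \theta$ then $d_Y^\pi(X,Z) > \theta - 2\xi > \xi$, so the new exceptional set sits inside the old finite one. Axiom \ref{condition.p3} follows from $\diam(\pi_X'(Y)) \le \diam(\pi_X(Y)) + 2\xi \le 3\xi \le \theta$, while \ref{condition.p4} and the promoted \ref{condition.p4'} follow by translating each hypothesis $d_\bullet(\cdot,\cdot) > \theta$ back into a statement about $d_\bullet^\pi$ via the diameter comparison and applying the original axioms, the $2\xi$ errors being absorbed into the slack built into $11\xi$. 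The final assertion is then immediate from the shape of the construction: $\pi_X'(Y)$ is a union of the vertex sets $\pi_X(Z)$, so when the $\C(Y)$ are graphs and the original projections consist of vertices, so do the modified ones. In summary, every step is a direct unwinding of \ref{condition.p3}--\ref{condition.p5} except the no-drift lemma keeping the class union inside a single $\xi$-neighbourhood, which is the crux of the argument and the source of the passage from $\xi$ to $11\xi$.
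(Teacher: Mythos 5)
First, a structural point: the paper does not actually prove \Cref{thm.modification}. It is quoted from \cite[Theorem 4.1]{BBFS}, and the only argument the paper supplies is the one-line justification of the \emph{Further} clause, namely that by \cite[Definition 4.11]{BBFS} each $\pi_Y'(X)$ is a union $\bigcup_W \pi_Y(W)$ over a non-empty set, hence a collection of vertices whenever the original projections are. Your closing remark handles the \emph{Further} clause in exactly this way, so that part matches the paper. Everything else in your proposal is an attempt to reprove the BBFS theorem itself, and there it has a genuine gap.

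The gap is precisely the statement you call the no-drift lemma, and it cannot be waved through, because it is the entire content of the theorem. For \ref{condition.p4'} you need $\pi_X'(\cdot)$ to be literally constant on your same-side classes, so the classes must be closed under the one-step relation; that is, they are transitive closures of the relation ``$d_Y^\pi(X,Z)>\xi$ or $d_Z^\pi(X,Y)>\xi$''. But axiom \ref{condition.p4} only pins \emph{consecutive} links of a chain: from $Y\sim Z_1\sim Z_2\sim\dots\sim Z_n$ you get $\diam\bigl(\pi_X(Y)\cup\pi_X(Z_n)\bigr)\le (n+1)\xi$ and nothing better, so the union over a class a priori drifts linearly in the chain length. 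Axiom \ref{condition.p5} does not rescue this: it gives finiteness of $\{W : d_W^\pi(U,V)>\xi\}$ for a \emph{fixed} pair $(U,V)$, whereas the pairs witnessing successive links change at every step, so no uniform bound on $n$, let alone on the drift, follows. Consequently the first bullet is unestablished, and with it \ref{condition.p3} for the new projections, the third bullet, and your transfer of \ref{condition.p5} (which invokes the third bullet) all collapse; your claim that the whole class stays within a single $\xi$-neighbourhood is, as stated, almost certainly too strong. This is also not how the cited proof proceeds: in \cite{BBFS} the modified projection is built from a \emph{one-step} set of elements lying between $X$ and $Y$ (with inflated thresholds), for which the containment in a $\xi$-neighbourhood is immediate from \ref{condition.p4}, and the real work goes into showing that the resulting unions coincide exactly when $d_Y(X,Z)$ is large --- that analysis is what produces $\theta=11\xi$. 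Your proposal correctly isolates where the difficulty lies, but asserting that it follows by ``feeding \ref{condition.p4} into \ref{condition.p5}'' is a restatement of the hope, not an argument, so the proposal does not constitute a proof.
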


The \textit{Further} part of the above theorem may be deduced from the definition of $\pi_Y'(X)$ in \cite[Definition 4.11]{BBFS}. Indeed, one may write $\pi_Y'(X)$ as a union $\bigcup_W \pi_Y(W)$ where the union is over a non-empty set. 

Given a collection of projections $\{\pi_Y : Y \in \Y\}$ with projection distances $d_Y: \Y \setminus \{Y\} \times \Y \setminus \{Y\} \rightarrow [0,\infty)$ and projection constant $\theta$ satisfying the \textit{strong} projection axioms, we can now define standard paths, the projection complex and the quasi-tree of metric spaces. 

\begin{definition} \label{def.projectioncomplexsp}
For a constant $K \geq 0$, define 
\[\Y_K(X,Z) = \{Y \in \Y : d_Y(X,Z) > K\}\]
and
\[\Y_K[X,Z] = \Y_K(X,Z) \cup \{X,Z\}\]
In what follows, we will also use the notation
\[L(X,Z) = \absval{\Y_K(X,Z)}\]
and
\[L[X,Z] = \absval{\Y_K[X,Z]}\]
\end{definition}

One should interpret the statement $Y \in \Y_K(X,Z)$ as saying that $Y$ is \textit{between} $X$ and $Z$. 

\begin{definition} \label{def.projectioncomplex}
The \textit{projection complex} is the graph $\pc$ with vertex set $\Y$ and an edge between two vertices if $\Y_K(X,Z) = \emptyset$.
\end{definition}

We now state three results which describe the structure of the sets $\Y_K[X,Z]$. These are Lemma 2.3, Proposition 2.4 and Corollary 2.6 of \cite{BBFS} respectively.

\begin{lemma}[Bestvina--Bromberg--Fujiwara] \label{lem.projections}
For $K \geq 2\theta$, and $Y,Y' \in \Y_K(X,Z)$, the following conditions are equivalent
\begin{enumerate}
    \item $d_Y(X,Y') > \theta$;
    \item $d_{Y'}(Y,Z) > \theta$;
    \item $d_Y(Y',Z) \leq \theta$;
    \item $d_{Y'}(X,Y) \leq \theta$;
    \item $d_Y(Y',W) = d_Y(Z,W)$ for all $W \neq Y$;
    \item $d_{Y'}(Y,W) = d_{Y'}(X,W)$ for all $W \neq Y'$.
\end{enumerate}
\end{lemma}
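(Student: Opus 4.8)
The plan is to exploit two structural features of the modified projection distance $d_Y$. First, $d_Y$ is symmetric and satisfies a genuine triangle inequality $d_Y(X,Z) \le d_Y(X,W) + d_Y(W,Z)$ for every $W \neq Y$; this is immediate from $d_Y(X,Z) = \diam(\pi_Y(X) \cup \pi_Y(Z))$ together with the fact that $\pi_Y(W)$ is a non-empty set that ``bridges'' $\pi_Y(X)$ and $\pi_Y(Z)$. Second, I would lean on the two strong axioms: the Behrstock-type inequality \ref{condition.p4}, read as $d_B(A,C) > \theta \Rightarrow d_A(B,C) \le \theta$, and the projection-stability axiom \ref{condition.p4'}, read as $d_B(A,C) > \theta \Rightarrow \pi_A(B) = \pi_A(C)$. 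The one conceptual point that drives the whole argument is that the standing hypotheses $d_Y(X,Z) > K \ge 2\theta$ and $d_{Y'}(X,Z) > K \ge 2\theta$ let the triangle inequality upgrade an upper bound of the shape ``$\le \theta$'' into a lower bound of the shape ``$> \theta$''.

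First I would establish the cyclic chain $(1) \Rightarrow (4) \Rightarrow (2) \Rightarrow (3) \Rightarrow (1)$ among the four inequality conditions. The steps $(1) \Rightarrow (4)$ and $(2) \Rightarrow (3)$ are each a single application of \ref{condition.p4}, taking the triple $(A,B,C)$ to be $(Y',Y,X)$ and $(Y,Y',Z)$ respectively. The steps $(4) \Rightarrow (2)$ and $(3) \Rightarrow (1)$ are where the factor $2\theta$ enters: from $d_{Y'}(X,Y) \le \theta$ and $d_{Y'}(X,Z) > 2\theta$ the triangle inequality at $Y'$ (with intermediate index $Y$) gives $d_{Y'}(Y,Z) \ge d_{Y'}(X,Z) - d_{Y'}(X,Y) > \theta$, which is $(2)$; and symmetrically at $Y$ (with intermediate index $Y'$) one recovers $(1)$ from $(3)$.

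Next I would fold in the two equality conditions using \ref{condition.p4'}. Applied with $(A,B,C) = (Y,Y',Z)$, that axiom turns $(2)$ into $\pi_Y(Y') = \pi_Y(Z)$; since $d_Y(Y',W) = \diam(\pi_Y(Y') \cup \pi_Y(W))$, this equality of projection sets yields $d_Y(Y',W) = d_Y(Z,W)$ for all $W \neq Y$, i.e. $(5)$, giving $(2) \Rightarrow (5)$. Symmetrically, \ref{condition.p4'} with $(A,B,C) = (Y',Y,X)$ promotes $(1)$ to $\pi_{Y'}(Y) = \pi_{Y'}(X)$ and hence to $(6)$. To close the loop I would specialise the universally quantified conditions: setting $W = X$ in $(5)$ gives $d_Y(X,Y') = d_Y(X,Z) > 2\theta > \theta$, which is $(1)$, so $(5) \Rightarrow (1)$; and setting $W = Z$ in $(6)$ gives $d_{Y'}(Y,Z) = d_{Y'}(X,Z) > \theta$, which is $(2)$, so $(6) \Rightarrow (2)$. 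Combined with the earlier chain, all six conditions become equivalent.

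I do not anticipate a serious obstacle; the substance is careful bookkeeping rather than a new idea. The point demanding genuine attention is the verification, at each invocation, that the three indices drawn from $\{X,Y,Y',Z\}$ are pairwise distinct — this holds because $Y, Y' \in \Y_K(X,Z)$ forces $Y, Y' \notin \{X,Z\}$ and we assume $Y \neq Y'$ — and that every use of the triangle inequality has its intermediate index distinct from the base index so the bridging projection is defined. The only place an error could realistically slip in is conflating the one-way implication \ref{condition.p4} with an equivalence, so I would be explicit about which end index the conclusion concerns in each application.
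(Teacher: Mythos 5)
Your proof is correct, and it is essentially the standard argument: the cyclic chain $(1)\Rightarrow(4)\Rightarrow(2)\Rightarrow(3)\Rightarrow(1)$ obtained by alternating \ref{condition.p4} with the triangle inequality for $d_Y$ (valid because the projections are non-empty, with the hypothesis $d_Y(X,Z), d_{Y'}(X,Z) > K \geq 2\theta$ supplying the needed slack), followed by the upgrade to the equality conditions $(5)$ and $(6)$ via \ref{condition.p4'} and their specialisation at $W=X$, $W=Z$, is the same proof given in \cite{BBFS}; note that the paper itself only quotes this lemma from \cite{BBFS} and supplies no proof of its own. The single point worth making explicit is the standing assumption $Y \neq Y'$ (without which conditions $(1)$--$(4)$ are not even defined), which you correctly identify and use when invoking \ref{condition.p4} and \ref{condition.p4'} on pairwise distinct triples.
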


Given $K \geq 2\theta$ and two distinct elements $Y, Y' \in \Y_K(X,Z)$, we write $Y < Y'$ if any of the above equivalent conditions hold. Write $X < Y$ and $Y < Z$ for all $Y \in \Y_K(X,Z)$ and write $X < Z$.

\begin{proposition}[Bestvina--Bromberg--Fujiwara] \label{prop.projections}
Suppose $K \geq 2\theta$. The relation $<$ on $\Y_K[X,Z]$ defines a total order with least element $X$ and greatest element $Z$. Further, if $Y < Y' < Y''$ in $\Y_K[X,Z]$ then $d_{Y'}(Y, Y'') = d_{Y'}(X,Z)$.
\end{proposition}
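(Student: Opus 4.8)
The plan is to verify the three assertions in turn — totality, the identification of $X$ and $Z$ as extremal elements, and finally the displayed identity — treating the endpoints $X,Z$ separately from the interior vertices of $\Y_K(X,Z)$ throughout. This separation is forced on us because $<$ is defined on interior pairs through the equivalences of \Cref{lem.projections} (which require $K\geq 2\theta$), whereas on pairs involving $X$ or $Z$ it is imposed by the conventions $X<Y<Z$ and $X<Z$. The bulk of the work is transitivity; everything else is bookkeeping resting on \Cref{lem.projections}.

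For totality I would first observe that for distinct interior $Y,Y'\in\Y_K(X,Z)$ the statements ``$Y<Y'$'' and ``$Y'<Y$'' are mutually exclusive and exhaustive: by characterisation (4) of \Cref{lem.projections}, $Y<Y'$ holds precisely when $d_{Y'}(X,Y)\leq\theta$, while (reading (1) with the roles of $Y,Y'$ exchanged) $Y'<Y$ holds precisely when $d_{Y'}(X,Y)>\theta$. Exactly one of these inequalities holds, giving trichotomy, and irreflexivity is automatic since $<$ relates only distinct elements. Comparability of $X$ or $Z$ with every other vertex, and the claims that $X$ is least and $Z$ greatest, are then immediate from the conventions together with the fact that nothing is declared below $X$ or above $Z$.

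Transitivity is the main obstacle, and I would reduce it to an interior triple $Y_1<Y_2<Y_3$, since whenever an endpoint appears the conclusion is a declared inequality (e.g.\ if the largest term is $Z$). For an interior triple the idea is to relocate all the information to the index $Y_3$. Applying condition (6) of \Cref{lem.projections} to $Y_1<Y_2$ with $W=Y_3$ gives $d_{Y_2}(Y_1,Y_3)=d_{Y_2}(X,Y_3)$, and condition (1) applied to $Y_2<Y_3$ shows this exceeds $\theta$; hence $d_{Y_2}(Y_1,Y_3)>\theta$. Now the strong projection axiom \ref{condition.p4'}, applied with middle index $Y_2$, yields $\pi_{Y_3}(Y_1)=\pi_{Y_3}(Y_2)$, so that $d_{Y_3}(Y_1,Y_2)=\diam(\pi_{Y_3}(Y_1))\leq\theta$ by \ref{condition.p3}. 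Finally condition (6) for $Y_2<Y_3$ with $W=Y_1$ identifies $d_{Y_3}(X,Y_1)=d_{Y_3}(Y_2,Y_1)$, which we have just bounded by $\theta$; by characterisation (4) this is exactly the assertion $Y_1<Y_3$. The one point to get right is that \ref{condition.p4'} controls the projection of the \emph{middle} vertex onto each outer one, so it applies to $\pi_{Y_3}$ and not only $\pi_{Y_1}$; symmetry of the projection distance in its two arguments makes this legitimate.

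For the displayed identity, note that $Y'$ is necessarily interior, and I would argue in two steps. Condition (6) applied to $Y<Y'$ with $W=Y''$ gives $d_{Y'}(Y,Y'')=d_{Y'}(X,Y'')$ (vacuously correct when $Y=X$), and condition (5) applied to $Y'<Y''$ with $W=X$ gives $d_{Y'}(X,Y'')=d_{Y'}(X,Z)$ (trivially true when $Y''=Z$). Chaining these equalities yields $d_{Y'}(Y,Y'')=d_{Y'}(X,Z)$. The only care needed is the endpoint casework, which in each step collapses to a tautology precisely when the relevant condition of \Cref{lem.projections} is unavailable.
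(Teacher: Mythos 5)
Your proof is correct, but note that the paper contains no proof of this proposition to compare against: it is quoted directly from Bestvina--Bromberg--Fujiwara (Proposition 2.4 of \cite{BBFS}, alongside Lemma 2.3 and Corollary 2.6), so your argument is a reconstruction of the cited result rather than an alternative to an in-paper proof. As a reconstruction it checks out: the trichotomy for interior pairs via the complementary conditions (1) and (4) of \Cref{lem.projections}, the reduction of transitivity to interior triples (endpoint cases being settled by the declared inequalities), the relocation of the hypotheses to the index $Y_3$ via condition (6), and the two-step chaining of conditions (6) and (5) for the displayed identity are all sound, including the endpoint casework and the observation that symmetry of $d_{Y_2}$ in its two arguments lets you apply the strong axiom with $Y_3$ as the outer index. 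One small streamlining is available at the step where you combine \ref{condition.p4'} with \ref{condition.p3} to get $d_{Y_3}(Y_1,Y_2)\leq\theta$: the weak axiom \ref{condition.p4}, applied to the triple $(Y_3,Y_2,Y_1)$, gives exactly this bound in a single step, so the strong axiom is not actually needed anywhere in the argument.
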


\begin{corollary}[Bestvina--Bromberg--Fujiwara] \label{cor.projections}
Let $K \geq 2\theta$. Suppose $Y, Y'' \in \Y_K[X,Z]$ and for some $Y' \in \Y$ we have $d_{Y'}(Y,Y'') > K$. Then $Y' \in \Y_K(X,Z)$.
\end{corollary}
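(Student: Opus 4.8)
The plan is to convert the single hypothesis $d_{Y'}(Y,Y'') > K$ into two \emph{exact} equalities of projection sets, $\pi_{Y'}(X) = \pi_{Y'}(Y)$ and $\pi_{Y'}(Z) = \pi_{Y'}(Y'')$. Once these are in hand the corollary is immediate, since then
\[ d_{Y'}(X,Z) = \diam\!\left(\pi_{Y'}(X) \cup \pi_{Y'}(Z)\right) = \diam\!\left(\pi_{Y'}(Y) \cup \pi_{Y'}(Y'')\right) = d_{Y'}(Y,Y'') > K, \]
so $Y' \in \Y_K(X,Z)$. I emphasise that the aim is an exact equality rather than a coarse bound: a naive triangle-inequality estimate would only give $d_{Y'}(X,Z) \ge d_{Y'}(Y,Y'') - 2\theta$, which is too weak to reach the threshold $K$. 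Throughout I will use \ref{condition.p4'} in the form: for distinct $P,Q,R$, if $d_Q(P,R) > \theta$ then $\pi_P(Q) = \pi_P(R)$ (and hence $d_P(Q,W) = d_P(R,W)$ for all admissible $W$).

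First I would record some elementary facts. As $d_{Y'}(Y,Y'')$ is defined and larger than $K \ge 2\theta \ge \diam(\pi_{Y'}(Y))$, we must have $Y' \ne Y, Y''$ and $Y \ne Y''$; relabelling if necessary, I assume $Y < Y''$ in the total order on $\Y_K[X,Z]$ furnished by \Cref{prop.projections}, so that $X \le Y < Y'' \le Z$. Two applications of \ref{condition.p4'} to the inequality $d_{Y'}(Y,Y'') > \theta$ then give
\[ \pi_Y(Y') = \pi_Y(Y'') \qquad\text{and}\qquad \pi_{Y''}(Y') = \pi_{Y''}(Y). \]
These are the bridges that let me transfer information recorded at $Y'$ to the centres $Y$ and $Y''$.

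Next I would exclude the possibility that $Y'$ is an endpoint. If $Y' = X$ then $Y \ne X$ (else $d_{Y'}(Y,Y'')$ is undefined), so $X < Y < Y''$ and \Cref{prop.projections} gives $d_Y(X,Y'') = d_Y(X,Z) > K > \theta$; applying \ref{condition.p4'} then yields $\pi_X(Y) = \pi_X(Y'')$, whence $d_X(Y,Y'') = \diam(\pi_X(Y)) \le \theta$ by \ref{condition.p3}. This contradicts $d_{Y'}(Y,Y'') = d_X(Y,Y'') > K \ge 2\theta$. The case $Y' = Z$ is symmetric, using the chain $Y < Y'' < Z$. Hence $Y' \notin \{X,Y,Y'',Z\}$ and all the projections below are defined.

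It remains to establish the two key equalities. For $\pi_{Y'}(X) = \pi_{Y'}(Y)$: this is trivial when $Y = X$, and otherwise $X < Y < Y''$ together with $\pi_Y(Y') = \pi_Y(Y'')$ and \Cref{prop.projections} gives
\[ d_Y(X,Y') = d_Y(X,Y'') = d_Y(X,Z) > \theta, \]
so \ref{condition.p4'}, applied to $d_Y(X,Y') > \theta$, produces $\pi_{Y'}(X) = \pi_{Y'}(Y)$. Symmetrically, $\pi_{Y'}(Z) = \pi_{Y'}(Y'')$ is trivial when $Y'' = Z$, and otherwise $Y < Y'' < Z$ together with $\pi_{Y''}(Y') = \pi_{Y''}(Y)$ gives $d_{Y''}(Z,Y') = d_{Y''}(Y,Z) = d_{Y''}(X,Z) > \theta$, so \ref{condition.p4'} yields $\pi_{Y'}(Z) = \pi_{Y'}(Y'')$. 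I expect this last step to be the crux: the hypothesis only controls the distance $d_{Y'}$ measured at $Y'$, so the real work is the two-stage transfer --- first converting $d_{Y'}(Y,Y'') > \theta$ into the equalities $\pi_Y(Y') = \pi_Y(Y'')$ and $\pi_{Y''}(Y') = \pi_{Y''}(Y)$ at the centres $Y$ and $Y''$, and then upgrading the interior distances $d_Y(X,Y'')$, $d_{Y''}(Y,Z)$ to the endpoint distances $d_Y(X,Z)$, $d_{Y''}(X,Z)$ via \Cref{prop.projections}. The endpoint cases ($Y=X$, $Y''=Z$) and the exclusion of $Y' \in \{X,Z\}$ are the bookkeeping that must be handled with care.
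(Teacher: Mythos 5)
Your proof is correct. Note that the paper does not prove \Cref{cor.projections} at all---it is quoted as Corollary 2.6 of \cite{BBFS}---and your argument is essentially the one given there: use the strong axiom \ref{condition.p4'} to replace coarse triangle-inequality estimates by exact equalities of projections (first $\pi_Y(Y')=\pi_Y(Y'')$ and $\pi_{Y''}(Y')=\pi_{Y''}(Y)$, then, via the order and the distance statement of \Cref{prop.projections}, $\pi_{Y'}(X)=\pi_{Y'}(Y)$ and $\pi_{Y'}(Z)=\pi_{Y'}(Y'')$), so that $d_{Y'}(X,Z)=d_{Y'}(Y,Y'')>K$ holds exactly rather than up to an additive error.
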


We now state some important facts about the graph $\pc$. These are Lemma 3.1, Theorem 3.5, Lemma 3.6 and Corollary 3.7 from \cite{BBFS} respectively.

\begin{lemma}[Bestvina--Bromberg--Fujiwara] \label{lem.projections2}
Let $K \geq 2\theta$. Suppose $\Y_K[X,Z] = \{X < X_1 < \dots < X_k < Z\}$. Then
\[X \rightarrow X_1 \rightarrow \dots \rightarrow X_k \rightarrow Z\] 
is a path in $\pc$. 
\end{lemma}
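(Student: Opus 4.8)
The plan is to reduce the statement to the single claim that consecutive elements of the total order on $\Y_K[X,Z]$ furnished by \Cref{prop.projections} are joined by an edge of $\pc$; granting this, the displayed path is immediate. Writing $X_0 = X$ and $X_{k+1} = Z$ for uniformity, it suffices to prove that $\Y_K(X_i, X_{i+1}) = \emptyset$ for each $0 \le i \le k$, since by \Cref{def.projectioncomplex} this is exactly the assertion that $X_i$ and $X_{i+1}$ are adjacent.

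I would argue by contradiction. Suppose $\Y_K(X_i, X_{i+1}) \neq \emptyset$ for some $i$, so there is $W \in \Y$ with $d_W(X_i, X_{i+1}) > K \ge 2\theta$. Since $X_i, X_{i+1} \in \Y_K[X,Z]$, \Cref{cor.projections} applies and forces $W \in \Y_K(X,Z)$. This is the crucial step: it places $W$ inside the totally ordered set $\Y_K[X,Z]$. As $d_W$ is only defined away from $W$, we have $W \neq X_i, X_{i+1}$; and since $X_i, X_{i+1}$ are consecutive, $W$ cannot lie strictly between them, so either $W < X_i$ or $W > X_{i+1}$.

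The goal in each case is to contradict $d_W(X_i, X_{i+1}) > K$ by showing $d_W(X_i, X_{i+1}) \le 2\theta$. I will use the elementary fact that the projection distances obey the triangle inequality $d_W(A, B) \le d_W(A, C) + d_W(C, B)$, which follows from the set-theoretic bound $\diam(P \cup Q) \le \diam(P \cup R) + \diam(R \cup Q)$ for nonempty $P,Q,R$. Suppose $W < X_i$. If $X_{i+1} = Z$, then $W < X_i$ both lie in $\Y_K(X,Z)$ and condition (3) of \Cref{lem.projections} gives directly $d_W(X_i, Z) \le \theta$, i.e. $d_W(X_i, X_{i+1}) \le \theta$. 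Otherwise $X_i$ and $X_{i+1}$ both lie in $\Y_K(X,Z)$, and applying condition (3) to the pairs $W < X_i$ and $W < X_{i+1}$ yields $d_W(X_i, Z) \le \theta$ and $d_W(X_{i+1}, Z) \le \theta$; the triangle inequality through $Z$ then gives $d_W(X_i, X_{i+1}) \le 2\theta$. The case $W > X_{i+1}$ is symmetric with the roles of $X$ and $Z$ interchanged: I instead invoke condition (4), which bounds $d_W(X, X_i)$ and $d_W(X, X_{i+1})$ by $\theta$ (with the endpoint subcase $X_i = X$ handled by a single use of condition (4)), and conclude via the triangle inequality through $X$. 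Either way $d_W(X_i, X_{i+1}) \le 2\theta \le K$, the desired contradiction.

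The argument is essentially bookkeeping once \Cref{cor.projections} is in hand, so I expect the only real subtlety to be keeping straight which equivalent condition of \Cref{lem.projections} to apply (condition (3) bounds projections toward $Z$ for the left neighbour, condition (4) bounds projections toward $X$ for the right neighbour) and separating the interior pairs, where the triangle inequality is needed, from the endpoint pairs, where a single condition suffices. The conceptual heart is that \Cref{cor.projections} refuses to let any between-ness witness $W$ for $(X_i, X_{i+1})$ escape the global order on $\Y_K[X,Z]$.
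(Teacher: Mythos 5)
Your proof is correct. Note, however, that this paper does not actually prove \Cref{lem.projections2}: it is one of the results quoted verbatim from \cite{BBFS} (it is their Lemma 3.1), so there is no internal proof to compare against. What you have done is reconstruct the standard argument from the other quoted ingredients, and every step checks out: the reduction via \Cref{def.projectioncomplex} to showing $\Y_K(X_i,X_{i+1})=\emptyset$ for consecutive elements, the use of \Cref{cor.projections} to force any witness $W$ into the totally ordered set $\Y_K(X,Z)$ of \Cref{prop.projections}, and the case analysis ($W<X_i$ or $W>X_{i+1}$) in which conditions (3) and (4) of \Cref{lem.projections}, combined with the triangle inequality for $d_W$ (valid because the sets $\pi_W(\cdot)$ are non-empty), yield $d_W(X_i,X_{i+1})\leq 2\theta\leq K$, contradicting $d_W(X_i,X_{i+1})>K$. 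You were also careful about the two points where carelessness could creep in: conditions (3) and (4) may only be applied to pairs of \emph{interior} elements of $\Y_K(X,Z)$, and your endpoint subcases ($X_{i+1}=Z$, respectively $X_i=X$) correctly fall back on a single application rather than two. This is essentially the argument in \cite{BBFS}, so your proof is a faithful, self-contained replacement for the citation.
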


We refer to the sets $\Y_K[X,Z]$ as \textit{standard paths} in $\pc$. Note that for any $X,Z \in \Y$ we have $\Y_K[X,Z] = \Y_K[Z,X]$ as a set, and $\Y_K[Z,X]$ is the path $\Y_K[X,Z]$ walked in reverse.  

\begin{theorem}[Bestvina--Bromberg--Fujiwara] \label{thm.pcquasi-tree}
For $K \geq 3\theta$, $\pc$ is a quasi-tree.  
\end{theorem}

\begin{lemma}[Bestvina--Bromberg--Fujiwara] \label{lem.projections3}
Suppose $K \geq 2\theta$. Let $X,Y,Z$ be distinct elements of $\Y$. Write $\Y_K[X,Y] = \{X = Y_0 < Y_1 < \dots < Y_k < Y_{k+1} < Y\}$ and $\Y_K[Y,Z] = \{Y = Z_0 < Z_1 < \dots < Z_l < Z_{l+1} < Z\}$. Then the standard path $\Y_K[X,Z]$ is the concatenation of three disjoint segments:
\begin{itemize}
    \item it begins with an initial segment $X \rightarrow Y_1 \rightarrow \dots \rightarrow Y_i$ in $\Y_K[X,Y]$ for some $0 \leq i \leq k$;
    \item it then has a (possibly empty) middle segment with at most $2$ vertices which are in neither $\Y_K[X,Y]$ or $\Y_K[Y,Z]$;
    \item finally, it ends with a terminal segment $Z_{j} \rightarrow Z_{j+1} \rightarrow \dots \rightarrow Z_l \rightarrow Z$ for some $1 \leq j \leq l+1$.
\end{itemize}
\end{lemma}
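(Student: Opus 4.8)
The plan is to argue entirely with the combinatorics of the total order supplied by \Cref{prop.projections}, leaning on the strong axiom \ref{condition.p4'}, the diameter triangle inequality $d_W(A,C) \leq d_W(A,B) + d_W(B,C)$ (valid since all projections are non-empty), and the ``convexity'' of standard paths from \Cref{cor.projections}. The single most useful observation is that the order relation from \Cref{lem.projections} only refers to the basepoint: for $A,B$ strictly between $X$ and $Z$ one has $A < B \iff d_A(X,B) > \theta$, a condition that never mentions $Z$. Consequently, on the overlap of two standard paths sharing the endpoint $X$ the two induced orders coincide, and symmetrically for paths sharing $Z$. I would set $\mathcal{I} = \Y_K(X,Z) \cap \Y_K(X,Y)$ and $\mathcal{T} = \Y_K(X,Z) \cap \Y_K(Y,Z)$, and aim to show that $\mathcal{I}$ is a common initial subpath of $\Y_K[X,Z]$ and $\Y_K[X,Y]$, that $\mathcal{T}$ is a common terminal subpath of $\Y_K[X,Z]$ and $\Y_K[Y,Z]$, and that whatever is left over has at most two vertices.

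First I would prove $\mathcal{I}$ is downward closed in $\Y_K[X,Z]$. Take $W \in \mathcal{I}$ and $W' < W$, so $X < W' < W$; by \Cref{prop.projections} this gives $d_{W'}(X,W) = d_{W'}(X,Z) > K$. Since $W \in \mathcal{I}$ we have $d_W(X,Y) > K$, while condition (4) of \Cref{lem.projections} for the pair $W' < W$ gives $d_W(X,W') \leq \theta$; the triangle inequality at $W$ then yields $d_W(W',Y) > K - \theta \geq \theta$, so axiom \ref{condition.p4'} (apex $W$) forces $\pi_{W'}(W) = \pi_{W'}(Y)$ and hence $d_{W'}(X,Y) = d_{W'}(X,W) > K$, i.e. $W' \in \mathcal{I}$. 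Running the identical computation inside $\Y_K[X,Y]$ (with the second endpoint now $Z$ rather than $Y$) shows $\mathcal{I}$ is downward closed there too. As the orders agree and \Cref{lem.projections2} turns consecutive order-elements into path edges, $\mathcal{I}$ is literally a common initial subpath $X \to Y_1 \to \dots \to Y_i$. The symmetric argument, swapping the roles of $X$ and $Z$, shows $\mathcal{T}$ is a common terminal subpath of $\Y_K[X,Z]$ and $\Y_K[Y,Z]$.

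It then remains to control the middle $\mathcal{M} = \Y_K(X,Z) \setminus (\mathcal{I} \cup \mathcal{T})$. Each such $W$ has $d_W(X,Y) \leq K$ and $d_W(Y,Z) \leq K$ but $d_W(X,Z) > K \geq 2\theta$, so the triangle inequality forces at least one of $d_W(X,Y), d_W(Y,Z)$ to exceed $\theta$; I would call $W$ \emph{type A} when $d_W(X,Y) > \theta$ and \emph{type B} when $d_W(Y,Z) > \theta$. By \ref{condition.p4'} a type A vertex satisfies $\pi_Y(W) = \pi_Y(X)$ and a type B vertex satisfies $\pi_Y(W) = \pi_Y(Z)$. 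In the degenerate case $d_Y(X,Z) \leq \theta$ (where $Y$ fails to sit properly between $X$ and $Z$) I would check separately that $\mathcal{I}$ and $\mathcal{T}$ already exhaust the path, so $\mathcal{M}$ is empty; in the main case $d_Y(X,Z) > \theta$ no vertex can be both types, every middle vertex has exactly one type, and all type A vertices share the projection $\pi_Y(\cdot) = \pi_Y(X)$ while all type B share $\pi_Y(\cdot) = \pi_Y(Z)$.

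The hard part, and the step I expect to be the main obstacle, is upgrading this to \emph{at most one vertex of each type}, which is exactly where the constant $2$ is born. The delicacy is the gap between the ordering threshold $\theta$ and the membership threshold $K$: a middle vertex is only ``weakly between'' $X$ and $Y$ (projections aligned) without achieving $d_\cdot(X,Y) > K$. The plan is to show that two type A middle vertices $W < W'$ cannot coexist: their common alignment $\pi_X(\cdot) = \pi_X(Y)$, $\pi_Y(\cdot) = \pi_Y(X)$ from \ref{condition.p4'}, combined with the fullness relation $d_{W}(X,W') = d_W(X,Z) > K$ from \Cref{prop.projections}, should force one of them to satisfy $d_\cdot(X,Y) > K$, placing it in $\mathcal{I}$ and contradicting that it is a middle vertex; the type B case is symmetric. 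This yields $\lvert \mathcal{M} \rvert \leq 2$, and assembling the three order-consecutive pieces via \Cref{lem.projections2} gives the claimed concatenation, with the middle vertices lying in neither $\Y_K(X,Y)$ nor $\Y_K(Y,Z)$ by construction. Everything outside this one counting step is bookkeeping with the order and the triangle inequality.
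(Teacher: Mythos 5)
First, a remark on the comparison itself: the paper never proves this statement; it is quoted verbatim from \cite{BBFS} (Lemma 3.6 there), so your proposal has to be judged on its own merits rather than against an in-paper argument. Your first stage is correct and complete: the downward-closure computation for $\mathcal{I}$ (Proposition~\ref{prop.projections} giving $d_{W'}(X,W)=d_{W'}(X,Z)>K$, condition (4) of \Cref{lem.projections} giving $d_W(X,W')\leq\theta$, the triangle inequality giving $d_W(W',Y)>K-\theta\geq\theta$, then \ref{condition.p4'} transferring $\pi_{W'}(W)=\pi_{W'}(Y)$ so that $d_{W'}(X,Y)=d_{W'}(X,W)>K$) is exactly right, as is the observation that the orders agree on overlaps because condition (1) of \Cref{lem.projections} refers only to the basepoint. (You should dispose of the edge case $W'=Y$: if $Y\in\Y_K(X,Z)$ then \ref{condition.p4'} shows no element of $\mathcal{I}$ can lie above $Y$, so the case never arises.)

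The genuine gap is in the middle-segment count, and it is two-fold. First, your case split is backwards. In the degenerate case $d_Y(X,Z)\leq\theta$ it is \emph{false} that $\mathcal{I}$ and $\mathcal{T}$ exhaust the path: picture $Y$ hanging off the middle of the path from $X$ to $Z$, say $\C(W)$ an interval with $\pi_W(X),\pi_W(Y),\pi_W(Z)$ at positions $0,K,2K$ and $Y$ attached at the midpoint; then $W$ is a middle vertex and $d_Y(X,Z)=0$. It is the \emph{main} case $d_Y(X,Z)>\theta$ where $\mathcal{M}=\emptyset$: if $W\in\mathcal{M}$ is type A, \ref{condition.p4'} gives $\pi_Y(W)=\pi_Y(X)$, hence $d_Y(W,Z)=d_Y(X,Z)>\theta$, hence (apex $Y$) $\pi_W(Y)=\pi_W(Z)$, hence $d_W(X,Y)=d_W(X,Z)>K$, a contradiction. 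Since your outline runs the counting argument only in the main case, the one case where there is actually something to count is left unproven. Second, the counting step itself is only asserted (``should force''), and it is the entire content of the constant $2$.

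The good news is that your target claim (no two type A middle vertices, no two type B) is correct, provable from exactly the ingredients you name, and needs no case split at all. Suppose $W<W'$ are middle and both type A. By \Cref{prop.projections}, $d_W(X,W')=d_W(X,Z)>K>\theta$, so \ref{condition.p4'} (apex $W$, pair $(X,W')$) gives $\pi_{W'}(W)=\pi_{W'}(X)$; therefore $d_{W'}(W,Y)=d_{W'}(X,Y)>\theta$ since $W'$ is type A; so \ref{condition.p4'} again (apex $W'$, pair $(W,Y)$) gives $\pi_W(W')=\pi_W(Y)$; therefore $d_W(X,Y)=d_W(X,W')=d_W(X,Z)>K$, forcing $W\in\mathcal{I}$, a contradiction. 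The mirror argument through $Z$ (using $d_{W'}(W,Z)=d_{W'}(X,Z)>K$) rules out two type B middle vertices, and the triangle inequality with $K\geq 2\theta$ shows every middle vertex has a type, so $\lvert\mathcal{M}\rvert\leq 2$ in all cases. Note the mechanism is a two-step bootstrap of \ref{condition.p4'}, routed through the order relation, not the single application your sketch gestures at; and once you have it, your main/degenerate dichotomy should simply be deleted rather than repaired.
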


The above lemma is the key to understanding the geometry of the projection complex. It says that the standard paths $\Y_K[X,Z]$, $\Y_K[X,Y]$, $\Y_K[Y,Z]$ \textit{almost} form a tripod. Instead, the three standard paths have a cycle of length at most $9$ in their centre. See \Cref{fig.pctriangle}.

\begin{figure}[h]
\centering
\includegraphics[width=0.5\textwidth]{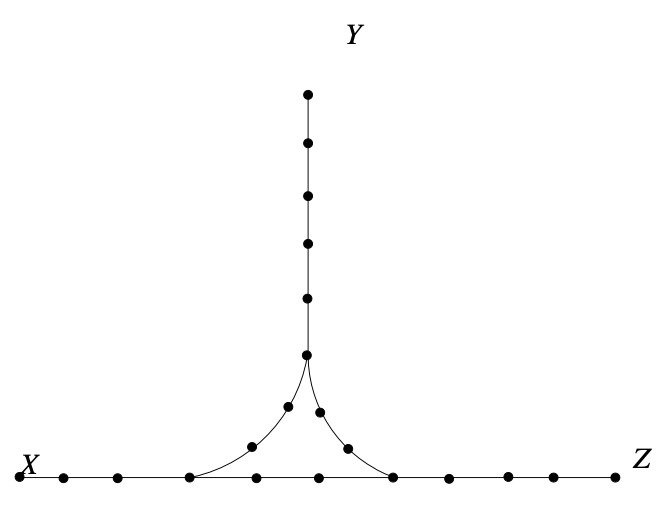} 
\caption{A triangle of standard paths. Taken from \cite{BBFS}.}
\label{fig.pctriangle}
\end{figure}

\begin{corollary}[Bestvina--Bromberg--Fujiwara] \label{cor.pcdist}
For $K \geq 3\theta$, standard paths are quasigeodesics; for all $X,Z \in \Y$ we have
\[(L[X,Z] - 1)/2 \leq d_{\pc}(X,Z) \leq L[X,Z] - 1\]
\end{corollary}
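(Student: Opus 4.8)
The plan is to treat the two inequalities separately, since the whole point of the corollary is that a standard path realises the upper estimate while never being worse than twice the true distance. The upper bound is essentially free: by \Cref{lem.projections2}, if we write $\Y_K[X,Z] = \{X < X_1 < \dots < X_k < Z\}$ then this exhibits an honest edge-path $X \to X_1 \to \dots \to X_k \to Z$ in $\pc$ with exactly $L[X,Z]$ vertices, hence of combinatorial length $L[X,Z]-1$. Therefore $d_{\pc}(X,Z) \le L[X,Z]-1$, and all the content lies in the lower bound $d_{\pc}(X,Z) \ge (L[X,Z]-1)/2$, equivalently $L[X,Z] \le 2\,d_{\pc}(X,Z) + 1$.

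The key step will be a quasi-subadditivity estimate: for all distinct $X,Y,Z$ I claim $L[X,Z] \le L[X,Y] + L[Y,Z]$. This is exactly what \Cref{lem.projections3} is built to deliver. That lemma decomposes the standard path $\Y_K[X,Z]$ as an initial segment contained in $\Y_K[X,Y]$, a middle segment of at most two vertices lying in neither $\Y_K[X,Y]$ nor $\Y_K[Y,Z]$, and a terminal segment contained in $\Y_K[Y,Z]$. Deleting the endpoints $X$ and $Z$, the interior set $\Y_K(X,Z)$ is then covered by the interior of the initial segment (a subset of $\Y_K(X,Y)$), a middle set $M$ with $\absval{M} \le 2$, and the interior of the terminal segment (a subset of $\Y_K(Y,Z)$). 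Counting gives $L(X,Z) \le L(X,Y) + L(Y,Z) + 2$, and converting through the endpoint convention $L[A,B] = L(A,B) + 2$ the two endpoint contributions absorb the middle term to yield precisely $L[X,Z] \le L[X,Y] + L[Y,Z]$.

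With the subadditivity in hand, the lower bound is a routine iteration. I would fix a geodesic $X = W_0 \to W_1 \to \dots \to W_d = Z$ in $\pc$ with $d = d_{\pc}(X,Z)$. Consecutive vertices are adjacent, so $\Y_K(W_i,W_{i+1}) = \emptyset$ and hence $L[W_i,W_{i+1}] = 2$. Applying the subadditivity estimate repeatedly along the geodesic gives
\[
L[X,Z] \le \sum_{i=0}^{d-1} L[W_i,W_{i+1}] = 2d,
\]
whence $d_{\pc}(X,Z) \ge L[X,Z]/2 \ge (L[X,Z]-1)/2$, the degenerate case $X = Z$ being checked by hand. The assertion that standard paths are quasigeodesics then follows by applying the resulting two-sided estimate to subpaths of a standard path, which are themselves standard paths by the total order of \Cref{prop.projections} together with \Cref{cor.projections}.

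The main obstacle is the bookkeeping in the subadditivity step, and in particular pinning down the constant. One must verify that the only source of slack is the at-most-two middle vertices permitted by \Cref{lem.projections3}, and that the endpoint convention $L[\cdot,\cdot] = L(\cdot,\cdot) + 2$ cancels this slack exactly, so that the additive constant in $L[X,Z] \le L[X,Y] + L[Y,Z]$ is $0$ rather than $+1$; a leftover $+1$ would propagate through the iteration and degrade the final comparison from factor $2$ to factor $3$. The factor of two in the conclusion is therefore genuine, and traces precisely to the bounded central detours that \Cref{lem.projections3} allows between the three standard paths of a triangle.
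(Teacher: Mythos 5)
Your proof is correct; and since the paper states this corollary without proof (it is quoted as Corollary 3.7 of \cite{BBFS}), the real comparison is with the source, where the same derivation is given: the triangle decomposition of \Cref{lem.projections3} applied inductively along a path joining $X$ to $Z$. Your version --- the exact subadditivity $L[X,Z] \le L[X,Y] + L[Y,Z]$ (your endpoint bookkeeping is right: the at-most-two middle vertices are precisely absorbed by the two copies of the endpoint convention, so the additive constant is $0$ and the factor $2$ survives the iteration), iterated along a geodesic whose consecutive vertices satisfy $L[W_i,W_{i+1}]=2$, together with \Cref{lem.projections2} for the upper bound and the subpath property (recorded in the paper as \Cref{lem.pcsp}, proved exactly from \Cref{prop.projections} and \Cref{cor.projections} as you indicate) for quasigeodesicity --- is that same argument in an equivalent packaging.
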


\subsection{Quasi-trees of metric spaces}

Suppose again that $\{\pi_Y : Y \in \Y\}$ satisfies \ref{condition.p3} - \ref{condition.p5} and  \ref{condition.p4'} with respect to the projection constant $\theta$. Suppose also that $K \geq 3\theta$. 

\begin{definition} \label{def.qtoms}
The \textit{quasi-tree of metric spaces} $\qtoms$ is built by taking the disjoint union of the metric spaces $\C(Y)$ and then adding an edge of length $K$ between every element of $\pi_X(Y)$ and every element of $\pi_Y(X)$ whenever there is an edge between $X$ and $Y$ in $\pc$. We will often refer to one of the edges between $\pi_X(Y)$ and $\pi_Y(X)$ as a \textit{transverse edge}. 
\end{definition}

The name of $\qtoms$ comes from the fact that crushing all the disjoint metric spaces $\C(Y)$ that make up $\qtoms$ to points leaves us with the quasi-tree $\pc$. 

It is useful to extend the definitions of $\pi_Y$, $d_Y$ and $\Y_K(\cdot , \cdot )$ over to elements $x \in \C(X) \subseteq \qtoms$.

\begin{definition} \label{def.projofpoints}
For $x \in \C(X) \subseteq \qtoms$ and $z \in \C(Z) \subseteq \qtoms$ and $Y \in \Y$ 
\begin{itemize}
    \item we define 
    $\pi_Y(x) = 
    \begin{cases}
    \pi_Y(X) & \text{if $Y \neq X$} \\
    \{x\} & \textrm{if $Y = X$}
    \end{cases}
    $;
\item we define $d_Y(x,z) = \diam(\pi_Y(x) \cup \pi_Y(z))$ where the diameter is taken with respect to the metric on $\C(Y)$;
\item we define $\Y_K(x,z) = \{Y \in \Y : d_Y(x,z) > K\}$.
\end{itemize}
\end{definition}

We then have the following distance formula for $\qtoms$ \cite[Theorem 6.4]{BBFS}. It says that we can approximate distances in $\qtoms$ by just looking at the projections.

\begin{theorem}[Bestvina--Bromberg--Fujiwara] \label{thm.qtomsdistanceformula}
If $K \geq 4\theta$, then for all $x \in \C(X) \subseteq \qtoms$ and $z \in \C(Z) \subseteq \qtoms$ we have
\[\frac{1}{4} \sum_{Y \in \Y_K(x,z)} d_Y(x,z) \leq d_{\qtoms}(x,z) \leq 2 \sum_{Y \in \Y_K(x,z)} d_Y(x,z) + 3K\]
\end{theorem}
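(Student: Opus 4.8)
The plan is to prove the two inequalities separately by comparison: the upper bound by exhibiting an explicit path from $x$ to $z$ that runs along the standard path, and the lower bound by showing that \emph{every} path is forced to accumulate length proportional to each large projection distance. This is the strategy of Bestvina--Bromberg--Fujiwara, and the structural results \Cref{prop.projections} and \Cref{lem.projections3} already proved above are exactly the inputs it needs.

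For the upper bound, I would first write the standard path $\Y_K[X,Z] = \{X = Y_0 < Y_1 < \dots < Y_L < Y_{L+1} = Z\}$, whose interior vertices $Y_1, \dots, Y_L$ are (up to the two endpoints) precisely the elements of $\Y_K(x,z)$. By \Cref{lem.projections2} consecutive vertices $Y_i, Y_{i+1}$ are joined by an edge of $\pc$, hence by a transverse edge of length $K$ in $\qtoms$. I would then build a path that, inside each piece $\C(Y_i)$, travels from $\pi_{Y_i}(Y_{i-1})$ to $\pi_{Y_i}(Y_{i+1})$; since $Y_{i-1} < Y_i < Y_{i+1}$, \Cref{prop.projections} tells us this within-piece distance equals $d_{Y_i}(Y_{i-1},Y_{i+1}) = d_{Y_i}(X,Z) = d_{Y_i}(x,z)$. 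Summing, the total within-piece cost is $\sum_{Y \in \Y_K(x,z)} d_Y(x,z)$ (with the endpoint pieces $\C(X)$, $\C(Z)$ contributing $d_X(x,z)$ and $d_Z(x,z)$ when these are large), and the transverse-edge cost is $(L+1)K$. Because every interior $Y_i$ satisfies $d_{Y_i}(x,z) > K$, the number of pieces is at most $\frac{1}{K}\sum_{Y} d_Y(x,z)$, so the transverse-edge cost is absorbed: $(L+1)K \leq \sum_Y d_Y(x,z) + K$. Collecting constants yields $d_{\qtoms}(x,z) \leq 2\sum_{Y \in \Y_K(x,z)} d_Y(x,z) + 3K$.

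For the lower bound, let $\gamma$ be an arbitrary path from $x$ to $z$. The heart of the matter is that for each $Y \in \Y_K(x,z)$ the coarse projection of $\qtoms$ onto $\C(Y)$ is coarsely locally constant \emph{away} from $\C(Y)$ itself: for a point $w \in \C(W)$ with $W \neq Y$, its projection is the bounded set $\pi_Y(W)$, and the strong projection axiom \ref{condition.p4'}, together with the near-tripod structure of \Cref{lem.projections3}, guarantees that crossing a transverse edge moves this projection by only $O(\theta)$. Consequently the only subsegments of $\gamma$ that can change its $\C(Y)$-projection by more than a constant are those lying inside $\C(Y)$, and such a subsegment must have length at least $d_Y(x,z) - O(\theta)$, since the projection is obliged to travel from near $\pi_Y(x)$ to near $\pi_Y(z)$, a set of diameter $d_Y(x,z)$. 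As the portions of $\gamma$ inside distinct pieces $\C(Y)$ are pairwise disjoint, their lengths add, giving $\length(\gamma) \geq \sum_{Y \in \Y_K(x,z)} \bigl(d_Y(x,z) - O(\theta)\bigr)$.

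The main obstacle is precisely this lower bound: proving rigorously that the projection to $\C(Y)$ is coarsely constant off $\C(Y)$, and then controlling the accumulated additive errors $O(\theta)$ so that they do not destroy the estimate. This is where the hypothesis $K \geq 4\theta$ does its work — it forces each additive error to be a definite fraction of the corresponding $d_Y(x,z) > K$, so that $\sum_Y \bigl(d_Y(x,z) - O(\theta)\bigr) \geq \frac{1}{4}\sum_Y d_Y(x,z)$, which is the claimed inequality. Making the comparison of $\gamma$ against the pieces fully rigorous (rather than the intuitive "the path must cross each piece" picture) is the genuinely delicate step; everything else is bookkeeping of constants.
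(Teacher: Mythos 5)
First, a point of comparison: the paper does not prove this statement at all — it is quoted as background with a citation to \cite[Theorem 6.4]{BBFS} — so the only proof to measure yours against is Bestvina--Bromberg--Fujiwara's. Your upper bound is essentially their argument (and is also the path construction this paper borrows in its proof of \Cref{thm.toms}): travel along the standard path, pay $d_Y(x,z)$ inside each piece via \Cref{prop.projections}, and absorb the $(L+1)K$ of transverse edges using $d_Y(x,z)>K$ at interior vertices. That half is sound, up to routine endpoint bookkeeping.

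The lower bound, however, has a genuine gap, located exactly at the step you flag as delicate. Your mechanism is that crossing a transverse edge moves the projection to $\C(Y)$ by only $O(\theta)$. That is false: a transverse edge joins $\C(W)$ to $\C(V)$ where $W,V$ are adjacent in $\pc$, and adjacency (\Cref{def.projectioncomplex}) says only that $\Y_K(W,V)=\emptyset$, i.e. $d_Y(W,V)\leq K$ for every other $Y$. So the per-edge drift of the projection is bounded by $K$, not $O(\theta)$, and $K$ may be enormous compared with $\theta$. Since each transverse edge also has length $K$, what survives of your argument is only that the coarse projection onto a single $\C(Y)$ is coarsely Lipschitz, which yields $d_Y(x,z)\lesssim d_{\qtoms}(x,z)$ for each $Y$ \emph{separately}; this cannot be summed over $Y\in\Y_K(x,z)$, because the same subsegment of $\gamma$ (a long run of transverse edges, say) is being charged for the drift of every $Y$ simultaneously. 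To get the summed bound one must show that $\gamma$ genuinely enters each piece $\C(Y)$ with $Y\in\Y_K(x,z)$ at a point uniformly close to $\pi_Y(x)$ and leaves it at a point uniformly close to $\pi_Y(z)$; only then are the contributions $d_Y(x,z)-O(\theta)$ carried by pairwise disjoint subpaths. Proving that entry/exit statement is where the order structure on $\Y_K(x,z)$ does its work — the pinning properties (5) and (6) of \Cref{lem.projections}, together with \Cref{prop.projections} and \Cref{cor.projections} (Behrstock-type inequalities), which control the \emph{total} drift of $\pi_Y$ along the whole portion of $\gamma$ outside $\C(Y)$ rather than the drift per edge. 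This is the actual content of the BBF/BBFS proof; it is not recoverable from \ref{condition.p4'} plus the near-tripod lemma by bookkeeping, and the hypothesis $K\geq 4\theta$ earns its keep there, not in absorbing per-edge errors.
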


\section{A quasiisometric subtree of the projection complex} \label{sec.subtree}

Suppose we have a collection of projections $\{\pi_Y : Y \in \Y\}$ with projection distances $d_Y: \Y \setminus \{Y\} \times \Y \setminus \{Y\} \rightarrow [0,\infty)$ and projection constant $\theta$ that satisfy axioms \ref{condition.p3} - \ref{condition.p5} and \ref{condition.p4'}. Let $K \geq 4\theta$. 

\begin{definition} \label{def.tree}
Let $B \in \Y$ be a fixed basepoint. This corresponds to a vertex of $\pc$. We can define the following subgraph of $\pc$.
\[\T_K(\Y,B) := \bigcup_{X \in \Y} \Y_K[B,X] \]
We will generally write $\T = \T_K(\Y,B)$ for simplicity. In the above, $\Y_K[B,X]$ should be interpreted as a path that includes the relevant edges as described in \Cref{lem.projections2}.
\end{definition}

By definition $\T$ is a connected subgraph of $\pc$ that must contain all the vertices of $\pc$. 

\begin{theorem} \label{thm.subtree1}
The subgraph $\T \subseteq \pc$ is a tree.
\end{theorem}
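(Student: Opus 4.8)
The plan is to exploit the fact, already recorded after \Cref{def.tree}, that $\T$ is a connected subgraph of $\pc$ containing every vertex. Since a connected graph is a tree precisely when it is acyclic, it suffices to prove that $\T$ has no cycles. The entire argument will be reduced to a single \emph{consistency} (or prefix) lemma: for every $X \in \Y$ and every $W \in \Y_K[B,X]$, the standard path $\Y_K[B,W]$ coincides with the initial segment of $\Y_K[B,X]$ running from $B$ to $W$. This is the structural heart of the statement, and I expect it to be the main obstacle; everything else is bookkeeping.

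To prove the consistency lemma, write $\Y_K[B,X] = \{B = X_0 < X_1 < \dots < X_n < X_{n+1} = X\}$ as ordered by \Cref{prop.projections}, and suppose $W = X_i$ with $0 < i \leq n+1$. The claim amounts to the set equality $\Y_K(B,W) = \{X_1, \dots, X_{i-1}\}$, after which \Cref{lem.projections2} identifies this set with the path $B \to X_1 \to \dots \to X_{i-1} \to W$. For the inclusion $\{X_1,\dots,X_{i-1}\} \subseteq \Y_K(B,W)$, fix $j < i$; then $B < X_j < W$ inside $\Y_K[B,X]$, so \Cref{prop.projections} gives $d_{X_j}(B,W) = d_{X_j}(B,X) > K$, whence $X_j \in \Y_K(B,W)$. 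For the reverse inclusion, suppose $Y \in \Y_K(B,W)$, i.e.\ $d_Y(B,W) > K$. Since $B, W \in \Y_K[B,X]$, \Cref{cor.projections} forces $Y \in \Y_K(B,X)$, so $Y = X_j$ for some $1 \leq j \leq n$. If we had $j \geq i$ then, as $Y \neq W$, we would get $W < Y$ in $\Y_K(B,X)$, and condition (4) of \Cref{lem.projections} would yield $d_Y(B,W) \leq \theta < K$, a contradiction; hence $j < i$, as required.

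Granting the consistency lemma, I would deduce acyclicity by means of a parent function. For each vertex $W \neq B$, define $\mathrm{par}(W)$ to be the penultimate vertex of the path $\Y_K[B,W]$. Every edge of $\T$ lies on some standard path $\Y_K[B,X]$, and by \Cref{lem.projections2} its endpoints $U < V$ are consecutive in the order on that path. The consistency lemma shows $\Y_K[B,V]$ is exactly the prefix of $\Y_K[B,X]$ up to $V$, so the vertex immediately preceding $V$ is $U$; thus $U = \mathrm{par}(V)$. Consequently every edge of $\T$ has the form $\{\mathrm{par}(V),V\}$, and since $\mathrm{par}(V)$ lies strictly earlier on a standard path to $B$ we have $L[B,\mathrm{par}(V)] < L[B,V]$. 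Therefore following parents strictly decreases the quantity $L[B,\cdot]$, so no cycle can exist, and $\T$ is a tree.

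The delicate point, and the step I would spend the most care on, is the reverse inclusion in the consistency lemma: one must rule out the possibility that a vertex $Y$ lying \emph{beyond} $W$ on $\Y_K[B,X]$ nonetheless projects $B$ and $W$ far apart. This is exactly where the ordering machinery of \Cref{lem.projections}, \Cref{prop.projections} and \Cref{cor.projections} (all valid since $K \geq 4\theta \geq 2\theta$) is indispensable, and where an argument that merely invoked the tripod picture of \Cref{lem.projections3} would be insufficiently precise.
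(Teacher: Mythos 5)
Your proposal is correct in substance and follows essentially the paper's own route: your consistency lemma is exactly the paper's \Cref{lem.pcsp} specialized to initial segments based at $B$, proved with the same ingredients (\Cref{prop.projections} for one inclusion, \Cref{cor.projections} together with the order relations of \Cref{lem.projections} for the other), and your observation that every edge of $\T$ joins a vertex to its parent is the content of the Claim inside the paper's proof of \Cref{thm.subtree1}. Only the final bookkeeping differs: you rule out cycles with a rank-decreasing parent function (the unique-parent/maximal-vertex argument), whereas the paper walks around a putative cycle, repeatedly extending the standard path from $B$ via its Claim until the path would have to revisit a vertex.

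One elision is worth fixing. Your consistency lemma asserts an identification of \emph{paths}, but your proof only establishes the equality of \emph{vertex sets} $\Y_K(B,W) = \{X_1,\dots,X_{i-1}\}$. To conclude that the vertex immediately preceding $V$ in $\Y_K[B,V]$ is $U$ --- which is exactly what $U = \mathrm{par}(V)$ requires --- you must also check that the order on $\Y_K[B,W]$ (defined via \Cref{lem.projections} with respect to the pair $(B,W)$) agrees with the order inherited from $\Y_K[B,X]$; a priori the set equality alone does not pin down which of $X_1,\dots,X_{i-1}$ is penultimate in $\Y_K[B,W]$. The check is routine with your tools: for $l < m < i$, \Cref{prop.projections} applied in $\Y_K[B,X]$ gives $d_{X_l}(B,X_m) = d_{X_l}(B,X) > K > \theta$, and then condition (1) of \Cref{lem.projections}, now read with respect to the pair $(B,W)$, yields $X_l < X_m$ in $\Y_K[B,W]$. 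This is precisely the order-consistency step carried out in the last paragraph of the paper's proof of \Cref{lem.pcsp}; with it inserted, your argument is complete.
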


First we need a preliminary lemma.

\begin{lemma} \label{lem.pcsp}
Let $\Y_K[X,Z] = \{X = X_0 < X_1 < \dots< X_k < X_{k+1} = Z\}$ be the standard path from $X$ to $Z$. Let $0 \leq i < j \leq k + 1$. Then $\{X_i < X_{i+1} < \dots < X_j\}$ is the standard path from $X_i$ to $X_j$. 
\end{lemma}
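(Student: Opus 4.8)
The plan is to prove the set equality $\Y_K[X_i,X_j] = \{X_i, X_{i+1}, \dots, X_j\}$ together with the claim that the total order inherited from $\Y_K[X,Z]$ agrees with the standard order on $\Y_K[X_i,X_j]$. Since $\Y_K[X_i,X_j] = \Y_K(X_i,X_j) \cup \{X_i,X_j\}$ by definition, it suffices to establish $\Y_K(X_i,X_j) = \{X_{i+1}, \dots, X_{j-1}\}$ as sets and then to match the orders. All the work is powered by \Cref{lem.projections}, \Cref{prop.projections} and \Cref{cor.projections}.

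For the inclusion $\supseteq$, I would fix $i < l < j$. Since $X_i < X_l < X_j$ in the total order on $\Y_K[X,Z]$ coming from \Cref{prop.projections}, that same proposition gives $d_{X_l}(X_i,X_j) = d_{X_l}(X,Z)$. As $X_l \in \Y_K(X,Z)$ we have $d_{X_l}(X,Z) > K$, whence $d_{X_l}(X_i,X_j) > K$, i.e. $X_l \in \Y_K(X_i,X_j)$.

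For the reverse inclusion $\subseteq$, take any $Y \in \Y_K(X_i,X_j)$, so $d_Y(X_i,X_j) > K$. Because $X_i, X_j \in \Y_K[X,Z]$, \Cref{cor.projections} forces $Y \in \Y_K(X,Z)$, so $Y = X_l$ for some $1 \leq l \leq k$; moreover $l \neq i, j$ since $d_Y$ is undefined at $Y$ itself. It then remains to exclude $l < i$ and $l > j$. If $l < i$, then $X_l < X_i$, and applying condition (5) of \Cref{lem.projections} to the pair $X_l < X_i$ with $W = X_j$ yields $d_{X_l}(X_i,X_j) = d_{X_l}(Z,X_j)$; since $X_l < X_j$, either $X_j = Z$ (so this is $\diam(\pi_{X_l}(Z)) \leq \theta$) or condition (3) applied to $X_l < X_j$ gives $d_{X_l}(X_j,Z) \leq \theta$, and in both cases $d_{X_l}(X_i,X_j) \leq \theta < K$, a contradiction. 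The case $l > j$ is symmetric, using condition (6) with $W = X_i$ to get $d_{X_l}(X_i,X_j) = d_{X_l}(X,X_i)$ and then condition (4) (or the degenerate case $X_i = X$) to bound this by $\theta < K$. Thus $i < l < j$, completing the equality of sets.

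Finally, to confirm that the two orders coincide, I would observe that for any $Y, Y' \in \Y_K(X_i,X_j)$ we have $X_i < Y$ in $\Y_K(X,Z)$, so condition (6) of \Cref{lem.projections} applied to the pair $X_i < Y$ (with $W = Y'$) gives $d_Y(X_i,Y') = d_Y(X,Y')$; hence the inequality $d_Y(X_i,Y') > \theta$ defining the order on $\Y_K[X_i,X_j]$ is equivalent to $d_Y(X,Y') > \theta$, which is exactly the order inherited from $\Y_K[X,Z]$. I expect the reverse inclusion to be the main obstacle: pinning the intermediate vertex $Y$ down to lie between indices $i$ and $j$ requires selecting precisely the right equivalent condition from \Cref{lem.projections} for each of the two cases, and separately handling the degenerate situations in which $X_i$ or $X_j$ coincides with the global endpoints $X$ or $Z$ (where a projection distance collapses to a diameter bounded by $\theta$).
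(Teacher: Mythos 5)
Your proposal is correct and takes essentially the same route as the paper: forward inclusion of $\{X_i,\dots,X_j\}$ into $\Y_K[X_i,X_j]$ via \Cref{prop.projections}, reverse inclusion via \Cref{cor.projections}, and then a check that the order on $\Y_K[X_i,X_j]$ agrees with the inherited one. Your explicit case analysis excluding $l < i$ and $l > j$ (via conditions (5)/(3) and (6)/(4) of \Cref{lem.projections}, with the degenerate endpoint cases) merely fills in a step the paper asserts without detail, namely that $d_Y(X_i,X_j) > K$ forces $X_i < Y < X_j$ in $\Y_K[X,Z]$, so the two arguments match.
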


\begin{proof}
This follows from elementary arguments that use \Cref{lem.projections}, \Cref{prop.projections} and \Cref{cor.projections}.
\end{proof}

\begin{proof}[Proof of \Cref{thm.subtree1}]

We begin by proving the following claim.

\begin{claim*}
Suppose we have a standard path $\Y_K[B,X] = \{B = X_0 < X_1 < \dots< X_k < X_{k+1} = X\}$ and suppose there exists another vertex $Z$ connected by an edge (in $\T$) to $X$ such that $Z \neq X_k$. Then $\{B < X_1 < \dots < X_k < X < Z\}$ is the standard path from $B$ to $Z$. 
\end{claim*}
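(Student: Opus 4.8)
The plan is to read off the standard path $\Y_K[B,Z]$ directly from the definition $\T = \bigcup_{X \in \Y} \Y_K[B,X]$ together with the preliminary \Cref{lem.pcsp}, which says that any sub-path of a standard path is itself a standard path. Rather than invoke the full tripod description of \Cref{lem.projections3}, I would use the following key observation: since the edge joining $X$ and $Z$ lies in $\T$, it must be an edge of one of the standard paths $\Y_K[B,W]$ that constitute $\T$, and on such a path (by \Cref{lem.projections2}) its endpoints are \emph{consecutive} vertices in the total order of \Cref{prop.projections}.

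Concretely, I would fix $W \in \Y$ with $\{X,Z\}$ an edge of $\Y_K[B,W]$ and write $\Y_K[B,W] = \{B = W_0 < W_1 < \dots < W_{s+1} = W\}$. As $X$ and $Z$ are consecutive in this order, exactly one of two orientations holds: either $Z$ immediately precedes $X$, or $X$ immediately precedes $Z$. The main step, and the only place the hypothesis $Z \neq X_k$ is used, is to exclude the first orientation. If $Z = W_t$ and $X = W_{t+1}$, then applying \Cref{lem.pcsp} to the initial segment $\{W_0 < \dots < W_{t+1}\}$ identifies it with the standard path $\Y_K[B,X]$; since standard paths are unique as ordered sets (\Cref{prop.projections}), it must coincide with the given path $\{B = X_0 < X_1 < \dots < X_k < X\}$, forcing the immediate predecessor of $X$ to be $X_k$ and hence $Z = X_k$, contrary to assumption.

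Therefore $X$ immediately precedes $Z$, say $X = W_t$ and $Z = W_{t+1}$, and I would finish by applying \Cref{lem.pcsp} twice to $\Y_K[B,W]$: the segment up to $W_t$ is the standard path $\Y_K[B,X] = \{B < X_1 < \dots < X_k < X\}$, and the segment up to $W_{t+1}$ is the standard path $\Y_K[B,Z]$. Comparing these two gives $\Y_K[B,Z] = \{B < X_1 < \dots < X_k < X < Z\}$, which is exactly the claim. (A brief remark would dispatch the degenerate case $k=0$, where $X_k = B$, since the argument is unaffected.) The proof is short and contains no computation; the only genuinely delicate point is the orientation dichotomy and the use of $Z \neq X_k$ to rule out the wrong orientation, so that is where I would focus the writing.
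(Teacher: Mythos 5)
Your proposal is correct and is essentially the paper's own proof: both locate the edge $\{X,Z\}$ on some standard path $\Y_K[B,W]$ based at $B$, use \Cref{lem.pcsp} to rule out the orientation in which $Z$ immediately precedes $X$ (via the hypothesis $Z \neq X_k$), and then apply \Cref{lem.pcsp} again to the initial segments to identify $\Y_K[B,Z]$ as $\{B < X_1 < \dots < X_k < X < Z\}$. The only difference is notational ($W$ in place of the paper's $U$), so there is nothing to change.
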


\begin{proof}[Proof of claim]
Since the edge from $X$ to $Z$ is in $\T$, we know that there exists some vertex $U \in \Y$ such that $\Y_K[B,U] = \{B = X_0' < X_1' < \dots < X_l' < X_{l+1}' = U\}$ and $\{X,Z\} = \{X_j', X_{j+1}'\}$. Suppose for a contradiction that $X = X_{j+1}'$ and $Z = X_j'$. Then, by \Cref{lem.pcsp}, $B < X_1' < X_2' < \dots < X_{j-1}' < Z < X$ is the standard path from $B$ to $X$. It follows that $Z = X_k$ which contradicts our assumption on $Z$. So $X = X_j'$ and $Z = X_{j+1}'$. By \Cref{lem.pcsp}, we know that $B < X_1' < \dots < X_j'$ is the standard path from $B$ to $X$. \Cref{lem.pcsp} also implies that $B < X_1' < \dots < X_{j+1}'$ is the standard path from $B$ to $Z$. Hence $B < X_1 < \dots < X_k < X < Z$ is the standard path from $B$ to $Z$. 
\end{proof}

Suppose in $\T$ we have a cycle $C = \{Y_0, Y_1, \dots, Y_{l-1}\}$ where there is an edge between $Y_i$ and $Y_{i+1}$ (taken modulo $l$). Let $Y \in C$ and suppose that the path $\Y_K[B,Y]$ first intersects $C$ at the vertex $Y' \in C$. It follows that the standard path $\Y_K[B,Y'] = \{B < X_1 < \dots < X_k < Y'\}$ only intersects $C$ at $Y'$. Suppose, for simplicity, that $Y' = Y_0$. By the claim, we know that $B < X_1 < \dots < X_k < Y_0 < Y_1$ is the standard path from $B$ to $Y_1$. Similarly, we know that $B < X_1 < \dots < X_k < Y_0 < Y_1 < Y_2$ is the standard path from $B$ to $Y_2$. Continuing in this way, we find that $B < X_1 < \dots < X_k < Y_0 < Y_1 < Y_2 < \dots < Y_l < Y_0$ is the standard path from $B$ to $Y_0$ which is of course nonsense. 
\end{proof}

\begin{theorem} \label{thm.subtree2}
The inclusion map $\T \hookrightarrow \pc$ is a quasiisometry. 
\end{theorem}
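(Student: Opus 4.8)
The plan is to verify the two quasiisometry inequalities for the inclusion $\iota : \T \hookrightarrow \pc$. Since $\T$ is a connected subgraph of $\pc$ on the \emph{same} vertex set, every edge-path in $\T$ is also an edge-path in $\pc$, so $d_{\pc}(X,Z) \le d_{\T}(X,Z)$ for all $X,Z \in \Y$; this is one of the two required inequalities (with $\lambda = 1$, $\mu = 0$), and coarse surjectivity is immediate because $\T$ already contains every vertex of $\pc$. It therefore remains only to produce constants with $d_{\T}(X,Z) \le \lambda\, d_{\pc}(X,Z) + \mu$. By \Cref{cor.pcdist} we have $(L[X,Z]-1)/2 \le d_{\pc}(X,Z)$, so it suffices to find a constant $C$ with
\[ d_{\T}(X,Z) \le L[X,Z] - 1 + C, \]
as this yields $d_{\T}(X,Z) \le 2\, d_{\pc}(X,Z) + C$.

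First I would exploit that $\T$ is a genuine tree (\Cref{thm.subtree1}): the standard paths $\Y_K[B,X]$ and $\Y_K[B,Z]$ are simple paths in $\T$ from the basepoint $B$, and since a tree has a unique simple path between any two vertices, they must be the tree-geodesics $[B,X]_\T$ and $[B,Z]_\T$. Rooting $\T$ at $B$ and setting $W = \lca(X,Z)$, the $X$–$Z$ geodesic factors as $[X,W]_\T \cup [W,Z]_\T$, giving
\[ d_{\T}(X,Z) = \big(d_{\T}(B,X) - d_{\T}(B,W)\big) + \big(d_{\T}(B,Z) - d_{\T}(B,W)\big) = (L[B,X]-1) + (L[B,Z]-1) - 2\,d_{\T}(B,W). \]
The remaining task is to show this equals $L[X,Z]-1$ up to bounded error. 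To do so I would apply the tripod description of \Cref{lem.projections3} to the triple $X,B,Z$: the standard paths $\Y_K[X,B]$, $\Y_K[B,Z]$, $\Y_K[X,Z]$ form an approximate tripod whose central region has uniformly bounded size (the middle segment has at most two vertices). The leg pointing to $B$ is exactly the common initial portion of $\Y_K[X,B]$ and $\Y_K[B,Z]$ read from $B$; since these paths are tree-geodesics, their maximal common initial portion is precisely $[B,W]_\T$, so its length equals $d_{\T}(B,W)$ up to the bounded central discrepancy. The legs to $X$ and to $Z$ are the near-$X$ part of $\Y_K[X,B]$ and the near-$Z$ part of $\Y_K[B,Z]$, which by \Cref{lem.projections3} together constitute all of $\Y_K[X,Z]$ except the bounded middle. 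Substituting these approximate leg lengths into the displayed identity, the two copies of $d_{\T}(B,W)$ cancel against the $B$-leg contributions hidden inside $L[B,X]$ and $L[B,Z]$, leaving $d_{\T}(X,Z) \le L[X,Z] - 1 + C$ with $C$ absorbing the bounded central corrections.

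The main obstacle is controlling this central region rigorously: I must verify that the maximal common initial segment of the two $B$-based standard paths coincides, to within a bounded number of vertices, with the $B$-leg of the tripod, and that the cut points $W$, $Y_i$, $Z_j$ are mutually within a uniform bound along the paths. The cleanest way to pin this down is to combine \Cref{lem.pcsp} (a subpath of a standard path between two of its vertices is again a standard path) with the claim established inside the proof of \Cref{thm.subtree1} (a standard path from $B$ properly extends a shorter one when one passes to a child in $\T$); together these force the common prefix of $\Y_K[B,X]$ and $\Y_K[B,Z]$ to be an honest standard path matching the $B$-leg of the tripod, after which the only remaining slack is the at-most-two-vertex middle segment, producing an explicit constant $C$.
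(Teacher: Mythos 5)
Your proposal is correct and follows essentially the same route as the paper's proof: both use the tree structure of $\T$ to write $d_\T(X,Z)$ as a sum of lengths of standard-path legs through the branch point of $\Y_K[B,X]$ and $\Y_K[B,Z]$ (your $\lca$ formulation is equivalent to the paper's divergence-vertex bookkeeping), then invoke \Cref{lem.projections3} on the triple $X,B,Z$ to match those legs with $\Y_K[X,Z]$ up to a bounded central region, and finish with \Cref{cor.pcdist}. The "main obstacle" you flag is handled in the paper by exactly the bounded-error inequalities $L[X,Y] \leq L[X,Y'] + 3$, $L[Y,Z] \leq L[Y'',Z]+3$ and $L[X,Z] \geq L[X,Y'] + L[Y'',Z] - 1$, all read off directly from \Cref{lem.projections3}, yielding $d_{\pc}(X,Z) \geq \tfrac{1}{2}(d_\T(X,Z)-5)$.
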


\begin{proof}
Evidently the inclusion map is Lipschitz; it remains to prove the lower bound of the quasiisometric inequality. 

Consider the triple $B,X,Z$. By \Cref{lem.projections3}, the path $\Y_K[X,Z]$ can be described as follows: an initial segment that is also initial within $\Y_K[X,B]$, a middle segment of length at most $2$, and a terminal segment that is also terminal in $\Y_K[B,Z]$. Let $Y'$ be the vertex at which $\Y_K[X,Z]$ diverges from $\Y_K[X,B]$ and let $Y''$ be the vertex at which $\Y_K[X,Z]$ joins $\Y_K[B,Z]$. Let $Y$ be the vertex at which $\Y_K[B,X]$ and $\Y_K[B,Z]$ diverge. By \Cref{lem.projections3} we have
\begin{equation}
L[X,Y] \leq L[X,Y'] + 3 \label{eqn.ub1}
\end{equation}
and
\begin{equation}
L[Y,Z] \leq L[Y'',Z] + 3 \label{eqn.ub2}
\end{equation}
Now, by the definition of $\T$, we have that
\begin{equation}
d_\T(X,Z) = d_\T(X,Y) + d_\T(Y,Z) = L[X,Y] - 1 + L[Y,Z] - 1 \label{eqn.treedist}
\end{equation}
We also know that 
\begin{equation}
L[X,Z] \geq L[X,Y'] + L[Y'', Z] - 1 \label{eqn.lb1}
\end{equation}
Hence 
\begin{align*}
d_{\pc}(X,Z) &\geq \frac{1}{2}(L[X,Z] - 1) & \textrm{by \Cref{cor.pcdist}} \\
&\geq \frac{1}{2}(L[X,Y'] + L[Y'', Z] - 1) & \textrm{by \eqref{eqn.lb1}} \\
&\geq \frac{1}{2}(L[X,Y] - 3 + L[Y, Z] - 3 - 1) & \textrm{by \eqref{eqn.ub1} and \eqref{eqn.ub2}} \\
&= \frac{1}{2}(d_\T(X,Z) - 5) &\textrm{by \eqref{eqn.treedist}}
\end{align*}
and so we are done.
\end{proof}

\section{The tree of metric spaces} \label{sec.toms}

We can use $\T = \T(\Y,B)$ to define a \textit{tree} of metric spaces $\toms$ as follows.

\begin{definition} \label{def.toms}
For each pair $X,Y \in \Y$ which are connected by an edge in $\pc$, fix an arbitrary element $p(X,Y) \in \pi_X(Y) \subset \C(X)$ and an arbitrary element $p(Y,X) \in \pi_Y(X) \subset \C(Y)$. In order to define $\toms$, we take the disjoint union of the metric spaces $\C(X)$ and draw an edge of length $K$ between $p(X,Y)$ and $p(Y,X)$ whenever there is an edge between $X$ and $Y$ in $\T$.
\end{definition}

\begin{remark}
All sensible logic suggests that we should be using the notation $p_X(Y)$ instead of $p(X,Y)$ in order to properly mimic the existing notation $\pi_X(Y)$. However when the elements of $\Y$ have a subscript in their notation, or multiple subscripts, then the notation $p_{X}(Y)$ becomes painful to read. 
\end{remark}

Evidently, there exists a natural inclusion map $\toms \hookrightarrow \qtoms$. 

\begin{theorem} \label{thm.toms}
The inclusion $\toms \hookrightarrow \qtoms$ is a quasiisometry.
\end{theorem}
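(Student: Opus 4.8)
The inclusion is the identity on each piece $\C(Y)$, so the plan is to establish the two-sided comparison $\tfrac1\lambda d_{\toms}(x,z) - \mu \le d_{\qtoms}(x,z) \le d_{\toms}(x,z)$ for $x \in \C(X)$, $z \in \C(Z)$. The upper bound is immediate: $\toms$ is obtained from $\qtoms$ by deleting edges (every transverse edge except the chosen one for each adjacent pair, and every edge dual to a $\pc$-edge not lying in $\T$), so each $\toms$-path is also a $\qtoms$-path and distances can only increase. The whole content is therefore the reverse inequality $d_{\toms}(x,z) \le \lambda d_{\qtoms}(x,z) + \mu$.

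First I would record an exact formula for $d_{\toms}$. Since $\T$ is a tree (\Cref{thm.subtree1}), $\toms$ is a genuine tree of metric spaces: writing the $\T$-geodesic from $X$ to $Z$ as $X = Y_0, Y_1, \dots, Y_s = Z$, any path from $x$ to $z$ must cross the $s$ transverse edges dual to this geodesic in order, so
\[
d_{\toms}(x,z) = sK + d_{\C(X)}\bigl(x, p(X,Y_1)\bigr) + \sum_{i=1}^{s-1} d_{\C(Y_i)}\bigl(p(Y_i,Y_{i-1}), p(Y_i,Y_{i+1})\bigr) + d_{\C(Z)}\bigl(p(Z,Y_{s-1}), z\bigr).
\]
I then bound the three kinds of terms by quantities that the distance formula (\Cref{thm.qtomsdistanceformula}) controls by $d_{\qtoms}(x,z)$. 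For the transverse edges, $s = d_\T(X,Z) \le \lambda_0 d_{\pc}(X,Z) + \mu_0$ by \Cref{thm.subtree2}, while $d_{\pc}(X,Z) \le L[X,Z] - 1 = |\Y_K(X,Z)| + 1 \le |\Y_K(x,z)| + 1$ by \Cref{cor.pcdist} and \Cref{def.projofpoints}; since each $Y \in \Y_K(x,z)$ has $d_Y(x,z) > K$, the distance formula yields $|\Y_K(x,z)| \le \tfrac{4}{K} d_{\qtoms}(x,z)$, so $sK$ is controlled. For each internal term I would use $d_{\C(Y_i)}(p(Y_i,Y_{i-1}),p(Y_i,Y_{i+1})) \le \diam(\pi_{Y_i}(Y_{i-1}) \cup \pi_{Y_i}(Y_{i+1})) = d_{Y_i}(Y_{i-1},Y_{i+1})$.

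Set $Y = \lca(X,Z)$ in $\T$. By \Cref{lem.pcsp} the leg of the tree geodesic up to $Y$ is the standard path $\Y_K[X,Y]$ and the leg from $Y$ onward is $\Y_K[Y,Z]$, so for every $Y_i$ that is an interior vertex of one of these two standard paths, \Cref{prop.projections} gives the exact identity $d_{Y_i}(Y_{i-1},Y_{i+1}) = d_{Y_i}(X,Y)$ (resp. $d_{Y_i}(Y,Z)$); for those $Y_i$ that additionally lie on $\Y_K[X,Z]$ this equals $d_{Y_i}(X,Z) = d_{Y_i}(x,z)$, a term of the distance-formula sum. Thus the internal sum is at most $\sum_{Y \in \Y_K(x,z)} d_Y(x,z)$ plus the contribution of the boundedly many \emph{exceptional} vertices where the two legs fail to coincide with $\Y_K[X,Z]$. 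The two endpoint terms are handled identically: $p(X,Y_1) \in \pi_X(Y_1)$ and $\pi_X(Y_1)$ lies within $\theta$ of $\pi_X(Z) = \pi_X(z)$ by the ordering, so $d_{\C(X)}(x,p(X,Y_1)) \le d_X(x,z) + \theta$ and symmetrically at $Z$, both $d_X(x,z)$ and $d_Z(x,z)$ being single distance-formula terms. Assembling everything gives $d_{\toms}(x,z) \le C\bigl(\sum_{Y \in \Y_K(x,z)} d_Y(x,z) + K|\Y_K(x,z)|\bigr) + C \le C' d_{\qtoms}(x,z) + C'$.

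The main obstacle is the uniform control of the exceptional central terms, and this is where I expect to spend the most care. By \Cref{lem.projections3} the legs $\Y_K[X,Y]$ and $\Y_K[Y,Z]$ differ from $\Y_K[X,Z]$ only inside the central cycle (of length at most $9$) where the three standard paths meet, so there are at most a fixed number of exceptional $Y_i$; the difficulty is that each a priori carries a \emph{large} projection distance that must be reabsorbed into $\sum_{Y \in \Y_K(x,z)} d_Y(x,z)$. I would dispatch these using the strong projection axiom~\ref{condition.p4'} together with~\ref{condition.p3}: since $Y_{i-1}$ and $X$ lie on the same side of $Y_i$ along a standard path one has $d_{Y_i}(Y_{i-1},X) \le \theta$, and~\ref{condition.p4'} allows one to replace $\pi_{Y_i}(Y)$ by $\pi_{Y_i}(Z)$ up to an error $\theta$, yielding $d_{Y_i}(Y_{i-1},Y_{i+1}) \le d_{Y_i}(x,z) + O(\theta)$ at each exceptional vertex (with the genuinely shallow-branching case, where $d_Y(X,Z) \le 2\theta$, checked separately to confirm only boundedly many such vertices occur). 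As these are boundedly many, their total contribution is $O(\theta)$ beyond the distance-formula sum, and the proof concludes via \Cref{thm.qtomsdistanceformula}.
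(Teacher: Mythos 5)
Your overall route is sound and amounts to a repackaging of the paper's argument with different bookkeeping. The paper builds the path along $\Y_K[X,Z]$ inside $\qtoms$, shows it is an $(8,3K)$-quasigeodesic via \Cref{thm.qtomsdistanceformula}, and then splices in a detour of uniformly bounded length (at most $17K$) through the centre of the tripod so that the path lies in $\toms$, the detour being controlled by \Cref{lem.lemfortomsthm}. You instead write down the exact $\toms$-distance along the $\T$-geodesic and compare it term by term with the distance formula; the ingredients are the same (\Cref{thm.qtomsdistanceformula}, \Cref{prop.projections}, \Cref{lem.projections3}, \Cref{thm.subtree2}), and your estimates for the transverse-edge count, namely $|\Y_K(X,Z)| \le |\Y_K(x,z)| \le \tfrac{4}{K} d_{\qtoms}(x,z)$, and for the interior terms via $d_{Y_i}(Y_{i-1},Y_{i+1}) = d_{Y_i}(X,Y)$, resp.\ $d_{Y_i}(Y,Z)$, are correct. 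One omission: the theorem asserts a quasi\emph{isometry}, so you must also record coarse surjectivity; this is one line (every point of $\qtoms$ lies within $K/2$ of some $\C(Y) \subset \toms$), but your proposal never says it.

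The step that does not work as written is the treatment of the exceptional central vertices via \ref{condition.p4'}. To conclude $\pi_{Y_i}(Y) = \pi_{Y_i}(Z)$ from \ref{condition.p4'} you need a hypothesis of the form $d_Y(Y_i,Z) > \theta$, which you never verify and which can fail precisely in the ``shallow branching'' regime you defer to a case analysis that is left unexecuted. Moreover your diagnosis that these vertices ``a priori carry a large projection distance'' is backwards: they are automatically small. The repair is exactly the paper's \Cref{lem.lemfortomsthm}. If $Y_i$ is an interior vertex of the leg $\Y_K[X,Y]$, then $Y_i$ cannot be an interior vertex of $\Y_K[Y,Z]$ (by \Cref{lem.pcsp} the two legs are the standard paths forming the $\T$-geodesic, and a geodesic in a tree visits each vertex once, so the legs meet only at $Y$); hence $d_{Y_i}(Y,Z) \le K$. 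So for an exceptional $Y_i$, i.e.\ one with $Y_i \notin \Y_K(X,Z)$, the triangle inequality for diameters gives $d_{Y_i}(X,Y) \le d_{Y_i}(X,Z) + d_{Y_i}(Z,Y) \le 2K$, while for the remaining interior vertices $d_{Y_i}(X,Y) \le d_{Y_i}(x,z) + K$. With this, no appeal to \ref{condition.p4'} and no case distinction on $d_Y(X,Z)$ is needed: every interior term costs its distance-formula contribution plus at most $2K$, and the $O(K)$-per-vertex overhead is absorbed by your bound on the number of vertices. The endpoint terms are handled in the same spirit, since $d_X(Y,Z) \le K$ whenever $X \neq Y$ (as $X \notin \Y_K(Y,Z)$), giving $d_{\C(X)}(x,p(X,Y_1)) \le d_X(x,z) + K$.
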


We first need a lemma about the projection complex. 

\begin{lemma} \label{lem.lemfortomsthm}
Let $X,Y,Z$ be distinct vertices of $\pc$. 
\begin{enumerate}
    \item If $W$ is an element of $\Y_K[X,Y]$ and $\Y_K[X,Z]$ but not $\Y_K[Y,Z]$ then $d_W(Y,Z) \leq K$.
    \item If $W$ is an element of $\Y_K[X,Y]$ but not $\Y_K[X,Z]$ and not $\Y_K[Y,Z]$ then $d_W(X,Y) \leq 2K$.
\end{enumerate}
\end{lemma}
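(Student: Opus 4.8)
The plan is to observe that neither part of this lemma requires the structural results on standard paths (\Cref{lem.projections} through \Cref{cor.pcdist}); both follow directly from the definition of the sets $\Y_K(\cdot,\cdot)$ in \Cref{def.projectioncomplexsp}, together with one elementary estimate on diameters. So I would keep the proof short and purely definitional.

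For part (1), I would simply unwind the hypothesis $W \notin \Y_K[Y,Z]$. By \Cref{def.projectioncomplexsp} this means $W \neq Y$, $W \neq Z$, and $W \notin \Y_K(Y,Z)$. Since $W$ is distinct from both $Y$ and $Z$, the projection distance $d_W(Y,Z)$ is defined, and $W \notin \Y_K(Y,Z)$ says precisely that $d_W(Y,Z) \leq K$. The hypotheses $W \in \Y_K[X,Y]$ and $W \in \Y_K[X,Z]$ play no role in the conclusion; they only pin down the situation in which this case is applied in the proof of \Cref{thm.toms}.

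For part (2), the key input is the triangle inequality for diameters: for any nonempty subsets $A,B,C$ of a metric space one has $\diam(A \cup B) \leq \diam(A \cup C) + \diam(B \cup C)$, which one checks by fixing a point $c \in C$ and applying the triangle inequality to an arbitrary pair of points of $A \cup B$ (splitting into the cases where both lie in $A$, both lie in $B$, or one in each). I would apply this with $A = \pi_W(X)$, $B = \pi_W(Y)$, $C = \pi_W(Z)$, all of which are nonempty by the definition of the projections. The hypotheses $W \notin \Y_K[X,Z]$ and $W \notin \Y_K[Y,Z]$ give that $W$ is distinct from $X$, $Y$, $Z$ and, exactly as in part (1), that $d_W(X,Z) \leq K$ and $d_W(Y,Z) \leq K$. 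Hence
\[d_W(X,Y) = \diam(\pi_W(X) \cup \pi_W(Y)) \leq \diam(\pi_W(X) \cup \pi_W(Z)) + \diam(\pi_W(Y) \cup \pi_W(Z)) = d_W(X,Z) + d_W(Y,Z) \leq 2K.\]

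There is essentially no serious obstacle here; the only points requiring care are bookkeeping ones. First, one must confirm that $W$ is genuinely distinct from $X$, $Y$ and $Z$ so that the relevant projection distances are even defined — but this is automatic, since membership in $\Y_K[\cdot,\cdot]$ includes the two endpoints, so $W \notin \Y_K[\cdot,\cdot]$ forces $W$ to differ from them. Second, one must invoke nonemptiness of $\pi_W(Z)$ so that the diameter triangle inequality applies, which is guaranteed by \Cref{def.projections}. I would present this as a standalone lemma precisely because these two elementary estimates are the geometric facts that get fed into the proof of \Cref{thm.toms}.
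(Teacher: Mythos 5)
Your proof is correct and takes essentially the same route as the paper, which disposes of part (1) as immediate from the definition of $\Y_K[\cdot,\cdot]$ and proves part (2) via the triangle inequality $d_W(X,Y) \leq d_W(X,Z) + d_W(Z,Y) \leq 2K$. The only difference is that you spell out the justification of the triangle inequality for projection distances (via diameters of unions) and the distinctness/nonemptiness bookkeeping, which the paper leaves implicit.
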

\begin{proof}
The first statement is immediate. For the second, note that $d_W(X,Y) \leq d_W(X,Z) + d_W(Z,Y) \leq 2K$.
\end{proof}

\begin{proof}[Proof of \Cref{thm.toms}]

Coarse surjectivity is almost immediate: every element $x \in \qtoms$ is at distance at most $K/2$ from a metric space $\C(Y) \subset \toms$. Further, it is clear that the inclusion map is Lipschitz. It remains to prove the lower bound of the quasiisometric inequality. 

Let $x \in \C(X)$ and $z \in \C(Z)$. To produce a lower bound on $d_{\qtoms}(x,z)$, it is sufficient to find a nice path from $x$ to $z$ in $\toms$ whose length is comparable to the distance from $x$ to $z$ in $\qtoms$.

We begin by finding a quasigeodesic from $x$ to $z$ in $\qtoms$ (we will then find a path from $x$ to $z$ in $\toms$ of a comparable length). Suppose that $\Y_K[X,Z] = \{X = X_0 < X_1 < \dots < X_{n-1} < X_{n} = Z\}$. We have the following path $\gamma$ from $x$ to $z$ in $\qtoms$

\[
\adjustbox{scale=1,center}{
\begin{tikzcd}[cramped]
	x & {p(X_0,X_1)} \\
	& {p(X_1,X_0)} & {p(X_1,X_2)} \\
	&& {p(X_2,X_1)} & \dots & {p(X_{n-2},X_{n-1})} \\
	&&&& {p(X_{n-1},X_{n-2})} & {p(X_{n-1},X_{n})} \\
	&&&&& {p(X_{n},X_{n-1})} & z
	\arrow[from=1-1, to=1-2]
	\arrow[from=1-2, to=2-2]
	\arrow[from=2-2, to=2-3]
	\arrow[from=2-3, to=3-3]
	\arrow[from=3-3, to=3-4]
	\arrow[from=3-4, to=3-5]
	\arrow[from=3-5, to=4-5]
	\arrow[from=4-5, to=4-6]
	\arrow[from=4-6, to=5-6]
	\arrow[from=5-6, to=5-7]
\end{tikzcd}
}
\]
where each arrow either corresponds to a geodesic in $\C(X_i)$ or to a transverse edge between pairs of disjoint metric spaces. A segment $p(X_i,X_{i+1}) \rightarrow p(X_{i+1},X_{i})$ has length $K$. Each segment $p(X_{i},X_{i-1}) \rightarrow p(X_{i},X_{i+1})$ has length at most $d_{X_{i}}(x,z)$. Similarly, $x \rightarrow p(X_0,X_1)$ has length at most $d_{X_0}(x,z)$ and $p(X_n,X_{n-1}) \rightarrow z$ has length at most $d_{X_n}(x,z)$. Thus the length of $\gamma$ is at most 
\[\sum_{i} d_{X_i}(x,z) + nK \]
It is shown in the proof of \cite[Theorem 6.3]{BBFS} that
\[\sum_{i} d_{X_i}(x,z) + nK \leq 2 \sum_{Y \in \Y_K(x,z)} d_Y(x,z) + 3K\]
Applying \Cref{thm.qtomsdistanceformula} we see that
\[2 \sum_{Y \in \Y_K(x,z)} d_Y(x,z) + 3K \leq 8 d_{\qtoms}(x,z) + 3K \]
and so $\gamma$ has length at most $8 d_{\qtoms}(x,z) + 3K$. So $\gamma$ is an $(8,3K)$-quasigeodesic in $\qtoms$. 

We will now find a path in $\toms$ that is very close to $\gamma$. Let $X_i$ be the vertex of $\pc$ at which $\Y_K[X,Z]$ and $\Y_K[X,B]$ diverge (if they never diverge, then we are done, as $\gamma$ will lie inside $\toms$). Let $X_j$ be the vertex of $\pc$ at which $\Y_K[Z,X]$ and $\Y_K[Z,B]$ diverge (if they never diverge, then we are done, as $\gamma$ will lie inside $\toms$). Finally, let $Y$ be the vertex of $\pc$ at which $\Y_K[B,X]$ and $\Y_K[B,Z]$ diverge (if they never diverge, then we are done, as $\gamma$ will lie inside $\toms$). We can also assume that $X_i$, $X_j$ and $Y$ are distinct: if any of them are equal then $\gamma$ lies inside $\toms$. By \Cref{lem.projections3}, we have a geodesic of length at most $6$ from $X_i$ to $X_j$ in $\T$. This geodesic in $\T$ has the form $X_i \rightarrow \dots \rightarrow Y \rightarrow \dots \rightarrow X_j$. Now let $\eta$ be a geodesic in $\toms$ from $p(X_i,X_{i+1})$ to $p(X_j,X_{j-1})$.
The path $\eta$ is a "diversion" from $\gamma$ between $p(X_i,X_{i+1})$ and $p(X_j,X_{j-1})$. If we can show that $\eta$ has uniformly bounded length then we will be done, as the path which travels along $\gamma$ but takes the diversion $\eta$ will be a path in $\toms$ of comparable length to $\gamma$. 

The only metric spaces $\C(W)$ that $\eta$ travels through are those corresponding to the geodesic $X_i \rightarrow \dots \rightarrow Y \rightarrow \dots \rightarrow X_j$ in $\T$ described previously. The intersection of $\eta$ with $\C(X_i)$ corresponds to case (1) of \Cref{lem.lemfortomsthm} and hence has length at most $K$. Similarly, the intersection of $\eta$ with $\C(Y)$ or $\C(X_j)$ has length $K$. The intersection of $\eta$ with a metric space other than $\C(X_i), \C(Y), \C(X_j)$ corresponds to case (2) of \Cref{lem.lemfortomsthm} and hence has length at most $2K$. We conclude, by also taking into account the length of the transverse edges that $\eta$ travels along, that $\eta$ has length at most $17K$. \end{proof}

\section{A new proof of Hume's theorem} \label{sec.app}

\begin{theorem} \label{thm.mcg}
Let $S_{g,n}$ be a compact surface. Then $MCG(S_{g,n})$ admits a quasiisometric embedding into a finite product of trees. 
\end{theorem}

\begin{proof}
Bestvina, Bromberg and Fujiwara \cite[Theorem C]{BBF1} prove that there exists a quasiisometric embedding of the mapping class group into a finite product of quasiitrees of metric spaces
\[MCG(S_{g,n}) \rightarrow \prod_{i=1}^k \C_K(\Y^i) \]
where the metric spaces $\C(Y) \subset \C_K(\Y^i)$ are curve complexes of subsurfaces of $S_{g,n}$. By \Cref{intro.thm.toms}, we know that $\C_K(\Y^i)$ is quasiisometric to $\C_K^\T(\Y^i)$ which is a \textit{tree of curve complexes}. It can be proved using the results of \cite{MM} \cite{BF} \cite{MS} \cite{BUYALO} that the curve complexes $\C(Y)$ admit a quasiisometric embedding into a finite product of trees (see \cite[Proposition 5.2]{HUME} and \cite[Corollary 5.3]{HUME} for how to chain these results together). Since there are only finitely many isometry-types of subsurface curve complexes $\C(Y)$ associated to $S_{g,n}$ we conclude that there exists a uniform constant $k$ such that each $\C(Y)$ quasiisometrically embeds into a product of $k$ trees. Elementary arguments then prove that $\C_K^\T(\Y^i)$, which is a \textit{tree of curve complexes}, quasiisometrically embeds into a finite product of trees. 
\end{proof}

\bibliographystyle{alpha}
\bibliography{ref.bib}

\end{document}